\documentclass[reqno, 11pt]{amsart}
\usepackage{amsmath,amssymb,amsfonts,amsbsy}
\usepackage{graphicx}
\usepackage{psfrag}
\setlength{\textwidth}{16cm}
\setlength{\oddsidemargin}{0cm}
\setlength{\evensidemargin}{0cm}
\usepackage{color}
\usepackage{ulem}

\usepackage[mathscr]{euscript}
\usepackage{calrsfs}
\parindent0mm
\textwidth170mm
\textheight210mm
\oddsidemargin-7mm
\evensidemargin-7mm

\newtheorem{thm}{Theorem}[section]
 \newtheorem{cor}[thm]{Corollary}
 \newtheorem{lem}[thm]{Lemma}

 \newtheorem{defn}[thm]{Definition}
 \newtheorem{rem}[thm]{Remark}

\newcommand{\mysection}[1]{\section{#1} \setcounter{equation}{0}}
\newcommand{\be}{\begin{equation} \label}
\newcommand{\ee}{\end{equation}}
\newcommand{\bea}{\begin{eqnarray}\label}
\newcommand{\eea}{\end{eqnarray}}
\newcommand{\bas}{\begin{eqnarray*}}
\newcommand{\eas}{\end{eqnarray*}}
\newcommand{\bit}{\begin{itemize}}
\newcommand{\eit}{\end{itemize}}
\newcommand{\nn}{\nonumber}
\newcommand{\R}{\mathbb{R}}
\newcommand{\N}{\mathbb{N}}
\newcommand{\Z}{\mathbb{Z}}
\newcommand{\pO}{\partial\Omega}
\newcommand{\dN}{\partial_\nu}
\newcommand{\eps}{\varepsilon}
\newcommand{\dist}{{\rm dist}}
\newcommand{\supp}{{\rm supp} \, }
\newcommand{\subsubset}{\subset\subset}
\newcommand{\wto}{\rightharpoonup}
\newcommand{\io}{\int_{\Omega}\!}
\newcommand{\iop}{\int_{\Om'}\!}
\def\bge{\begin{eqnarray}}
\def\ege{\end{eqnarray}}
\def\bgee{\begin{eqnarray*}}
\def\egee{\end{eqnarray*}}

\newcommand{\abs}{\\[5pt]}
\newcommand{\Abs}{\\[5mm]}

\newcommand{\ue}{u_{\eps}}
\newcommand{\uen}{u_{0\eps}}
\newcommand{\uet}{u_{\eps t}}
\newcommand{\uu}{\underline{u}}
\newcommand{\ou}{\overline{u}}
\newcommand{\rhoeps}{\rho_\eps\!}
\newcommand{\na}{\nabla}
\newcommand{\Om}{\Omega}
\newcommand{\Ombar}{\overline{\Om}}
\newcommand{\intnT}{\int_0^T}
\newcommand{\dOm}{{\pO}}
\newcommand{\bdry}{|_{\dOm}}
\newcommand{\intdom}{\int_{\dOm}}
\newcommand{\norm}[2][]{\left\|#2\right\|_{#1}}
\newcommand{\Liom}{L^\infty(\Om)}
\newcommand{\ddt}{\frac{d}{dt}}
\newcommand{\del}{\partial}
\newcommand{\Ttilde}{\widetilde{T}}
\newcommand{\uhat}{\widehat{u}}
\newcommand{\set}[1]{\left\{#1\right\}}
\newcommand{\smallset}[1]{\{#1\}}
\newcommand{\phii}{\varphi}
\newcommand{\Lap}{\Delta}
\newcommand{\Tmax}{T_{max}}

\newcommand{\dom}{\del\Om}
\newcommand{\thetaa}{\vartheta}
\newcommand{\sub}{\subset}
\newcommand{\al}{\alpha}

\newcommand{\ba}{\begin{array}}
\newcommand{\ea}{\end{array}}

\def\bge{\begin{eqnarray}}
\def\bgee{\begin{eqnarray*}}
\def\ege{\end{eqnarray}}
\def\egee{\end{eqnarray*}}
\DeclareMathOperator*{\esssup}{ess\,sup}
\allowdisplaybreaks

\begin{document}

\title[A non-local degenerate parabolic problem]
{On a degenerate non-local parabolic problem describing infinite dimensional replicator dynamics}
\author{Nikos I. Kavallaris}
\address{
 Department of Mathematics, University of Chester, Thornton Science Park
Pool Lane, Ince, Chester  CH2 4NU, UK
}

\email{n.kavallaris@chester.ac.uk}
\author{Johannes Lankeit}
\address{Institut f\"ur Mathematik, Universit\"at Paderborn,
Warburger Str. 100, 33098 Paderborn, Germany}
\email{jlankeit@math.uni-paderborn.de}
%
%
\author{Michael Winkler}
\address{Institut f\"ur Mathematik, Universit\"at Paderborn,
Warburger Str. 100, 33098 Paderborn, Germany}
\email{michael.winkler@math.uni-paderborn.de}
\date{\today}
\begin{abstract}
\noindent
We establish the existence of locally positive weak solutions to the homogeneous Dirichlet problem for
\[
 u_t = u \Delta u + u \int_\Omega |\nabla u|^2
\]
in bounded domains $\Om\sub\R^n$ and prove that solutions converge to $0$ if the initial mass is small, whereas they undergo blow-up in finite time if the initial mass is large. We show that in this case the blow-up set coincides with $\overline{\Omega}$, i.e. the finite-time blow-up is global.\abs
  \noindent{\bf Key words:} Degenerate diffusion, non-local nonlinearity, blow-up, evolutionary games, infinite dimensional replicator dynamics\\
  {\bf Math Subject Classification (2010):} 35K55, 35K65, 35B44,  91A22.\\
\end{abstract}
\maketitle
\mysection{Introduction}
In a bounded domain $\Omega \subset \R^N,\;N \ge 1$, we consider nonnegative solutions to the
quasilinear degenerate and nonlocal parabolic problem
\be{0}
	\left\{ \begin{array}{ll}
	\displaystyle
	u_t = u\Delta u + u \int_\Omega |\nabla u|^2, \qquad & x\in\Omega, \ t>0, \\[2mm]
	u(x,t)=0, \qquad & x\in\pO, \ t>0, \\[2mm]
	u(x,0)=u_0(x), \qquad & x\in\Omega,
	\end{array} \right.
\ee
which arises in a game theoretical description of replicator dynamics in the case of a  Bomze-type
infinite dimensional setting \cite{B90} by pursuing a  modeling procedure introduced in \cite{KPY08, KPXY10, PS09} and which actually
assumes steep payoff-kernels of Gaussian type.
For completeness in this direction we include a concise derivation of the particular parabolic
equation in (\ref{0}) in the Appendix A.\abs
{\bf Strongly degenerate diffusion meets nonlocal gradient sources.} \quad
From a mathematical perspective, the evolution in (\ref{0}) is governed by two characteristic mechanisms, each of which
already gives rise to considerable challenges on its own.
Firstly, diffusion in (\ref{0}) is strongly degenerate at small densities in the sense that near points where $u=0$ 
typical diffusive effects are substantially inhibited.
Indeed, already in the unforced counterpart of (\ref{0}) with general power-type degeneracy, as given by
\be{deg_p}
	u_t=u^p\Delta u
\ee
with $p>0$, it is known that the particular value $p=1$, corresponding to the choice in (\ref{0}), marks a borderline
between somewhat mild degeneracies and strongly degenerate diffusion: In the case when $p<1$, namely, (\ref{deg_p})
allows for a transformation into the porous medium equation $v_t=\Delta v^m$ with $m:=\frac{1}{1-p}>1$,
thus meaning that in this case unique global continuous
weak solutions to the associated Dirichlet problem exist for all reasonably
regular nonnegative initial data (\cite{aronson}), and that these eventually become positive and smooth, and hence classical,
inside $\Omega$ (\cite{bertsch_peletier}).
If $p\ge 1$ then nonnegative global weak solutions can still be constructed for any nonnegative continuous initial data,
but they need no longer be continuous (\cite{BDalPU}) nor uniquely determined by the initial data
(\cite{LDalP}),
and moreover their spatial support	
will not increase with time (\cite{BU, LDalP, win_trieste}).\\
Even in the case when one resorts to continuous initial data which are strictly positive throughout $\Omega$,
in which in fact unique classical solutions exist for any $p>0$, the value $p=1$ corresponds to a critical
strength of degeneracy in that for $p<1$, after an appropriate waiting time, all solutions will enter the
cone
${\mathcal K}:=\{\varphi: \Omega \to \R \ | \ \varphi(x)\ge c \dist(x,\pO) \mbox{ for all } x\in\Omega \mbox{ and some } c>0\}$
(\cite{bertsch_peletier}),
thus reflecting a diffusion-driven effect generalizing the Hopf boundary point property in non-degenerate diffusion
processes.
In the case $p\ge 1$, however, solutions to (\ref{deg_p})
emanating from initial data which are suitably small near $\pO$ will
never enter ${\mathcal K}$ (\cite{win_AMUC}).\abs
Now in (\ref{0}), this degenerate diffusion process interacts with a spatially
nonlocal source which is such that unlike in
large bodies of the literature on related nonlocal parabolic equations (\cite{quittner_souplet}),
even basic questions concerning local solvability appear to be far from trivial:
Indeed, in light of an expected loss of appropriate solution regularity due to strongly degenerate diffusion,
even for smooth initial data
it seems a priori unclear whether solutions can be constructed which allow for a meaningful definition
of the Dirichlet integral $\io |\nabla u|^2$ for positive times.
This is in stark contrast to most nonlocal parabolic problems previously studied, in which either
diffusion is non-degenerate and hence such first-order expressions are controllable by $L^\infty$ bounds for solutions
at least for small times, such as e.g.~in the semilinear problem
\bas
	u_t = \Delta u + u^m\left(\int_\Omega |\nabla u|^2\,dx\right)^r	
\eas
studied for $m\geq 1$, $r>0$ in (\cite{Sou02}),
or the nonlocal terms involve only zero-order expressions which thus in a natural manner
also in cases of degeneracies as in (\ref{deg_p})
allow for local theories based on extensibility criteria in $L^\infty(\Omega)$ only (see \cite{deng_duan_xie, souplet_BUnonloc} and also the book
\cite{quittner_souplet}).\Abs
{\bf Main results.} \quad
Previous mathematical studies on the PDE in (\ref{0}) have concentrated on analyzing self-similar solutions only.
In \cite{KPY08}, the authors constructed self-similar solutions in the case $\Omega=\R$,
and in \cite{PS09} the same could be achieved in the multi-dimensional case $\Omega=\R^N$ with $N\ge 2$.
More recently, the authors in \cite{PV14} investigated the existence of self-similar solutions in the one-dimensional case
in a closely related problem in which the Laplacian is perturbed by a time-dependent term containing the first derivative
as well; all these self-similar solutions are shown to be singular and to approach Dirac-type
distributions as $t\searrow 0$.\abs
The goals of the present work consist in developing a fundamental theory of local solvability for (\ref{0}),
and in providing a first step toward an understanding of the qualitative solution behaviour.
In order to formulate our results, let us concretize the specific setting within which (\ref{0}) will be studied
by requiring that throughout the sequel, $\Omega$ denotes a bounded domain in $\R^N, N\ge 1$, with smooth boundary,
and by introducing the solution concept that we shall pursue as follows.
\begin{defn}\label{defi44}
  Let $T\in (0,\infty]$. By a {\em weak solution} of (\ref{0}) in $\Omega \times (0,T)$ we mean a nonnegative function
  \bas
	u\in L^\infty_{loc}(\bar\Omega \times [0,T)) \, \cap \, L^2_{loc}([0,T); W_0^{1,2}(\Omega))
	\qquad \mbox{with} \qquad u_t \in L^2_{loc}(\bar\Omega \times [0,T)),
  \eas
  which satisfies
  \be{0vw}
	-\int_0^T\!\!\!\! \io\!\! u\varphi_t\,dxdt + \int_0^T\!\!\!\! \io\!\! \nabla u \cdot \nabla (u\varphi)\,dxdt
	=\io\!\! u_0 \varphi(\cdot,0)\,dx + \int_0^T\!\!\! \Big( \io\!\! u\varphi\,dx \Big) \cdot \Big( \io\!\! |\nabla u|^2 \,dx\Big)dt
  \ee
  for all $\varphi \in C_0^\infty(\Omega \times [0,T))$.\\
  A weak solution $u$ of (\ref{0}) in $\Omega \times (0,T)$ will be called {\em locally positive} if
  $\frac{1}{u} \in L^\infty_{loc}(\Omega \times [0,T])$.
\end{defn}
\begin{rem} \quad
  Since $u \in L^2_{loc}([0,T);W_0^{1,2}(\Omega))$ and $u_t \in L^2_{loc}(\bar\Omega \times [0,T))$
  imply that $u \in C^0([0,T);L^2(\Omega))$, (\ref{0vw}) is equivalent
  to requiring that $u(\cdot,0)=u_0$, and that
  \be{0w}
	\int_0^T \io u_t \varphi \,dx\,dt+ \int_0^T \io \nabla u \cdot \nabla (u\varphi)\,dx\,dt
	=\int_0^T \Big( \io u\varphi\,dx \Big) \cdot \Big( \io |\nabla u|^2\,dx \Big)\,dt
  \ee
  holds for any $\varphi \in C_0^\infty(\Omega \times (0,T))$.
\end{rem}
In order to construct such locally positive weak solutions, we shall assume that the initial data satisfy
\bit
\item[(H1)]
  $u_0 \in L^\infty(\Omega) \cap W_0^{1,2}(\Omega)$ and
\item[(H2)]
  $u_0 \ge 0$ and $\frac{1}{u_0} \in L^\infty_{loc}(\Omega)$ as well as
\item[(H3)]
 there exists $L>0$ such that $\norm[\Phi,\infty]{u_0}\leq L$.
\eit
Here and below, for a measurable function $v\colon \Om\to\R$ we have set
\[
 \norm[\Phi,\infty]{v}:= \esssup_{x\in \Omega} \left|\frac{v}{\Phi}\right|,
\]
where $\Phi\in C^2(\Ombar)$ denotes the solution to
\begin{equation}\label{eq:defPhi}
 -\Delta \Phi = 1\quad \mbox{ in }\Om, \qquad \Phi\bdry=0.
\end{equation}
Note that according to the Hopf boundary point lemma, requiring $\norm[\Phi,\infty]{u_0}$ to be finite is an equivalent
way to ask for the possibility of estimating $u_0$ by a multiple of the function measuring the distance of
a point to $\dOm$.\abs
In this framework, the first of our main results indeed asserts local existence of locally positive weak solutions,
along with a favorable extensibility criterion only involving the norm of the solution in $L^\infty(\Omega)$.
\begin{thm}\label{thm:Tmax}
  Let $u_0$ satisfy (H1)-(H3).
  Then there exist $\Tmax\in(0,\infty]$ and a locally positive weak solution $u$ to \eqref{0} in $\Om\times(0,\Tmax)$
  which satisfies
  \be{ext_crit}
 	\mbox{either } \Tmax=\infty \quad \mbox{ or } \quad \limsup_{t\nearrow\Tmax} \norm[\Liom]{u(\cdot,t)}=\infty,
  \ee
  and which is such that for each smoothly bounded subdomain $\Om'\subsubset\Om$ there exists $C_{\Om'}>0$ with
  \bea{leq}
	\io |\na u(\cdot,t)|^2
	\le
	\io|\na u_0|^2 \cdot \exp\Bigg[\frac{1}{2C_{\Om'}}\left(\sup_{\tau\in(0,t)}\io u(\cdot,\tau)\right)
	\left(\iop\phi\ln u(\cdot,t)-\iop \phi\ln u_0+\int_0^t\iop  u \right) \Bigg]
  \eea
  as well as
  \be{leqnormphi}
   	\norm[\Phi,\infty]{u(\cdot,t)} \leq \max \set{\norm[\Phi,\infty]{u_0},\sup_{\tau\in(0,t)} \io|\na u(x,\tau)|^2\,dx}.
  \ee
  for a.e. $t\in(0,\Tmax)$.
\end{thm}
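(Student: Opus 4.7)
I would construct the locally positive weak solution $u$ as a limit of classical solutions $\ue$ to the family of non-degenerate regularizations
\[
 \uet = (\ue+\eps)\Lap\ue + \ue\io|\na\ue|^2 \qquad \mbox{in } \Om\times(0,T_\eps),
\]
with boundary condition $\ue|_\pO=\eps$ and smooth, strictly positive initial data $\uen$ satisfying $\uen\to u_0$ in $W_0^{1,2}(\Om)\cap L^\infty(\Om)$ and $\norm[\Phi,\infty]{\uen}\leq L+\eps$. For each fixed $\eps>0$ the nonlocal self-coupling can be resolved by a Banach fixed-point argument applied to the map $\la\mapsto \io|\na u_\la|^2$, where $u_\la$ solves the linear non-degenerate problem $u_t=(u+\eps)\Lap u+u\la(t)$; standard parabolic theory then promotes $\ue$ to a classical solution on a maximal interval $(0,T_\eps)$.

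\textbf{The two a-priori bounds.} For (\ref{leqnormphi}) I would set $v_\eps:=\ue/\Phi$ and use the identity $\Lap\ue = \Phi\Lap v_\eps + 2\na\Phi\cdot\na v_\eps - v_\eps$ coming from $-\Lap\Phi=1$. At a point of interior maximum of $v_\eps(\cdot,t)$ the first- and second-derivative conditions $\na v_\eps=0$, $\Lap v_\eps\leq 0$ give $\Lap\ue\leq -v_\eps$, so dividing the equation by $\Phi$ and evaluating at this point yields
\[
 \tfrac{d}{dt}\norm[\infty]{v_\eps(\cdot,t)}\leq \norm[\infty]{v_\eps}\bigl(\la_\eps(t)-\norm[\infty]{v_\eps}\bigr)
\]
up to an $\eps$-perturbation that vanishes in the limit, which instantly forces (\ref{leqnormphi}) by an ODI comparison. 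For (\ref{leq}) I would combine the dissipation identity
\[
 \tfrac{1}{2}\tfrac{d}{dt}\io|\na\ue|^2 = -\io(\ue+\eps)(\Lap\ue)^2+\la_\eps^2,
\]
obtained by testing against $-\Lap\ue$, with the logarithmic identity $\tfrac{d}{dt}\iop\phi\ln\ue = \iop\ue\Lap\phi + \la_\eps\iop\phi$, obtained by testing against $\phi/\ue$ for a suitable nonnegative cutoff $\phi$ supported near $\Om'$. Eliminating $\la_\eps$ between these relations and running a Gronwall loop, with $\sup_\tau\io\ue$ absorbing the $\iop\ue\Lap\phi$-contribution and the constant $C_{\Om'}$ encoding the lower bound of $\phi$ on $\Om'$, leads to (\ref{leq}).

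\textbf{Positivity, passage to the limit, and main obstacle.} Local positivity $1/u\in L^\infty_{loc}$ I would derive by comparison with a sub-solution of the form $c(t)\phi_{\Om''}$, where $\phi_{\Om''}$ solves $-\Lap\phi_{\Om''}=1$ on an intermediate subdomain $\Om'\subsubset\Om''\subsubset\Om$ and $c$ satisfies the ODI $c'\leq c(\la_\eps-c)$, which keeps $c$ bounded away from zero as long as $\la_\eps$ remains finite. The $\eps$-uniform control of $\ue$ in $L^\infty$, of $\na\ue$ in $L^2$, and of $\uet$ in $L^2_{loc}$ (extracted from the dissipation identity combined with the interior positivity) yields, via Aubin--Lions, strong $L^2_{loc}$-convergence of a subsequence to some $u$ which satisfies (\ref{0vw}); the bounds (\ref{leq})--(\ref{leqnormphi}) survive by lower semi-continuity, and (\ref{ext_crit}) follows by restarting the scheme at any $T_0<\Tmax$ where $\norm[\Liom]{u(\cdot,T_0)}$ remains finite. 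The principal difficulty is the self-referential nature of (\ref{leq})--(\ref{leqnormphi}): the pointwise bound requires finiteness of $\sup_\tau\la_\eps$, (\ref{leq}) delivers that only via $\sup_\tau\io\ue$, and the latter rests on the pointwise bound. Closing this bootstrap uniformly in $\eps$ on a short initial interval, while keeping the logarithm $\iop\phi\ln\ue$ under control despite the degeneracy $\ue\to 0$ near $\pO$, is the technical heart of the proof and is precisely what forces the locally-positive solution concept built into Definition~\ref{defi44}.
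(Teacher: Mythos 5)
Your overall architecture (non-degenerate regularization, an energy identity combined with the logarithmic functional $\iop\phi\ln\ue$ to control $\int_0^t\io|\na\ue|^2$, comparison arguments for the pointwise bounds, a compactness limit, and a restart argument for \eqref{ext_crit}) coincides with the paper's, and some of your individual steps are acceptable variants: the paper resolves the nonlocal coupling by Schauder's theorem after truncating the source with $\rhoeps(z)=\min\{z,1/\eps\}$ rather than by a Banach fixed point, and it proves \eqref{leqnormphi} by comparing $\ue$ with $\ou=C\Phi+\eps$, $C=\max\{M,\norm[\Phi,\infty]{\uen-\eps}\}$, rather than by evaluating at an interior maximum of $\ue/\Phi$. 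Note, though, that your maximum-principle route does not run as stated: with boundary datum $\ue|_{\pO}=\eps>0$ and $\Phi|_{\pO}=0$ the quotient $\ue/\Phi$ tends to $+\infty$ at $\pO$, so its supremum is never interior; one must pass to $(\ue-\eps)/\Phi$ and still rule out a boundary maximum, which is exactly what the comparison argument sidesteps. Similarly, your dissipation identity obtained by testing with $-\Lap\ue$ gives $\frac{d}{dt}\io|\na\ue|^2\le 2\big(\io|\na\ue|^2\big)^2$ and hence, after Gronwall and the logarithmic estimate, an exponent \emph{without} the factor $\sup_{\tau}\io\ue(\cdot,\tau)$; the literal form \eqref{leq} comes from testing with $\uet/\ue$ and using $\io\uet\le\big(\io\uet^2/\ue\big)^{1/2}\big(\io\ue\big)^{1/2}$, which is where that factor enters.

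The genuine gap is the passage to the limit in the nonlocal term. Aubin--Lions yields strong $L^2_{loc}$ compactness of $\ue$ itself and only \emph{weak} $L^2$ compactness of $\na\ue$; but to verify \eqref{0vw} you must pass to the limit in $\big(\io u\varphi\big)\big(\io|\na u|^2\big)$, which requires $\io|\na\ue|^2\to\io|\na u|^2$ in $L^1_{loc}([0,T))$, i.e. strong $L^2$ convergence of the gradients \emph{up to the boundary}. This is where the paper invests most of its effort: interior strong convergence of $\na\ue$ is extracted from the equation itself (rewriting $\Delta\ue=\uet/\ue-\rhoeps(\cdot)$ and exploiting the bound on $\io\uet^2/\ue$ together with interior positivity), and concentration of $|\na\ue|^2$ near $\pO$ is excluded by the weighted estimate $q\int_0^T\io\ue^{q-1}|\na\ue|^2\le C$ for $q\in(0,1)$ combined with the comparison bound $\ue\le A\Phi+\eta$ near $\pO$ (Lemma \ref{uepsbdry}). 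Your proposal offers no substitute for either step, and lower semicontinuity only gives the inequality $\io|\na u|^2\le\liminf\io|\na\ue|^2$, which is useless for identifying the limit of the source term. Without this ingredient the limit function is not shown to be a weak solution, so this is the piece you would need to supply.
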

We emphasize that the extensibility criterion (\ref{ext_crit}) particularly excludes any
{\it gradient blow-up} phenomenon in the sense of finite-time blow-up of $\nabla u$
despite boundedness of $u$ itself.
Indeed, the occurrence of unbounded gradients of bounded solutions appears to be a characteristic qualitative implication
of various types of interplay between diffusion, possibly degenerate, and gradient-dependent nonlinearities
(\cite{angenent_fila96, arrieta_rodriguezbernal_souplet04, li_souplet10, stinner_win08}).\abs
A natural next topic appears to consist in deriving conditions on the initial data which ensure that
the solutions found above either exist for all times, or blow up in finite time.
Here in view of the essentially cubic character of the production term in (\ref{0}) it is not surprising that
this may dominate the smoothing effect of the merely quadratic-type diffusion term
when the initial data are suitably large in an adequate sense;
precedent works indicate that indeed such intuitive considerations are appropriate in related non-degenerate
and degenerate
parabolic equations with local reaction terms (\cite{quittner_souplet,SGKM, stinner_win07, win_critexp} ).\\
As a remarkable feature of the precise structure of this interplay in (\ref{0}), we shall see that actually
a complete classification of all initial data in this respect is possible,
exclusively involving the size of the total initial mass $m:=\io u_0$ as the decisive quantity:
In fact, the second of our main results identifies the value $m=1$ to be critical with regard to
global solvability, and moreover gives some basic information on the asymptotic behaviour of solutions.
\begin{thm}\label{thm:longterm}
  Let $u_0$ satisfy (H1)-(H3), and let $u$ and $\Tmax$ denote the corresponding locally positive weak solution of \eqref{0},
  as well as its maximal time of existence,
  provided by Theorem \ref{thm:Tmax}.\\
  (i) \ If $\io u_0<1$, then $\Tmax=\infty$ and
  \[
 	\io u(x,t)\, dx\to 0
	\qquad \mbox{as } t\to\infty.
  \]
  (ii) \ Suppose that $\io u_0=1$. Then $\Tmax=\infty$ and
  \[
 	\io u(x,t)\,dx=1
	\qquad \mbox{for all } t>0.
  \]
  (iii) \ In the case $\io u_0\, dx >1$, we have $\Tmax<\infty$ and
  \[
 	\limsup_{t\nearrow \Tmax} \io u(x,t)\,dx = \infty.
  \]
\end{thm}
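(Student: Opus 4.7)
The central reduction is an ordinary differential identity for the total mass $m(t):=\io u(\cdot,t)\,dx$. Formally testing (\ref{0w}) with $\varphi\equiv 1$---which I would justify rigorously by approximating $1$ by cut-offs $\chi_\delta\in C_c^\infty(\Om)$ that coincide with $1$ on $\{\dist(\cdot,\pO)>\delta\}$ and by showing, via Hardy's inequality applied to the $W_0^{1,2}(\Om)$-regularity of $u(\cdot,t)$, that the cross term $\io u\,\na u\cdot\na\chi_\delta$ vanishes as $\delta\searrow 0$---one arrives at
\[
 m'(t)\;=\;(m(t)-1)\io|\na u(\cdot,t)|^2\,dx\qquad\mbox{for a.e. }t\in(0,\Tmax).
\]
Because $f(t):=\io|\na u|^2\in L^1_{loc}([0,\Tmax))$, this separable linear ODE integrates explicitly to
\[
 m(t)-1\;=\;(m(0)-1)\exp\Big(\textstyle\int_0^t f(s)\,ds\Big),
\]
so the sign of $m-1$ is preserved along the evolution and $m$ is monotone.

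This already gives case (ii): $m(0)=1$ forces $m(t)\equiv 1$. For case (i) we have $m(t)<1$ for all $t$, whence combining Poincar\'e with Cauchy--Schwarz,
\[
 \io|\na u|^2\;\ge\;\lambda_1\io u^2\;\ge\;\frac{\lambda_1}{|\Om|}\,m(t)^2
\]
(with $\lambda_1$ the first Dirichlet eigenvalue of $-\Delta$ on $\Om$), the ODE furnishes $m'(t)\le -(1-m(0))\frac{\lambda_1}{|\Om|}m(t)^2$, and a Bernoulli-type comparison with $y'=-cy^2$ delivers the $1/t$-decay $m(t)\to 0$. In case (iii) the same inequalities combined with the monotonicity $m(t)\ge m(0)>1$ give $m'(t)\ge(m(0)-1)\frac{\lambda_1}{|\Om|}m(t)^2$. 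Comparison with $y'=cy^2$, $y(0)=m(0)$, yields $m(t)\ge y(t)\to\infty$ as $t\to T^{*-}$, where $T^{*}:=|\Om|/((m(0)-1)\lambda_1 m(0))$. Since $m(t)\le|\Om|\,\norm[\Liom]{u(\cdot,t)}$ and $u\in L^\infty_{loc}(\bar\Om\times[0,\Tmax))$, this forces $\Tmax\le T^{*}<\infty$.

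The remaining conclusions---$\Tmax=\infty$ in cases (i) and (ii), and $\limsup_{t\nearrow\Tmax}m(t)=\infty$ in case (iii)---both reduce to one contradiction argument based on the conditional estimates (\ref{leq}) and (\ref{leqnormphi}). Suppose $\Tmax<\infty$ and $m(t)\le M$ for every $t\in(0,\Tmax)$. The elementary inequality $\ln s\le s$ for $s>0$ yields $\iop\phi\ln u(\cdot,t)\le\|\phi\|_{L^\infty(\Om')}M$, while $\int_0^t\iop u\le M\Tmax$ and $\iop\phi\ln u_0$ is finite by (H1)--(H2). Hence the exponent in (\ref{leq}) is uniformly bounded on $(0,\Tmax)$, giving $\sup_{t<\Tmax}\io|\na u(\cdot,t)|^2<\infty$; inserting this into (\ref{leqnormphi}) yields $\sup_{t<\Tmax}\norm[\Liom]{u(\cdot,t)}<\infty$, contradicting (\ref{ext_crit}). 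In cases (i) and (ii) the required boundedness of $m$ is a free byproduct of the ODE, so $\Tmax=\infty$; in case (iii) $\Tmax<\infty$ is already established and the contradiction rules out boundedness of $m$, which together with its monotonicity yields $\lim_{t\nearrow\Tmax}m(t)=\infty$.

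The principal technical hurdle is the rigorous derivation of the mass identity---specifically, showing that the cross term generated by approximating $\varphi\equiv 1$ from inside $\Om$ genuinely vanishes in the limit; beyond that, the argument is essentially two elementary ODE comparisons together with a single contradiction against (\ref{ext_crit}) that exploits the interplay between (\ref{leq}) and (\ref{leqnormphi}).
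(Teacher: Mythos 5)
Your argument is correct and follows essentially the same route as the paper: the mass identity $m'(t)=(m(t)-1)\io|\na u|^2$ is exactly Lemma \ref{totalmass}, the Poincar\'e--H\"older bound $\io|\na u|^2\ge c\,m^2$ with ODE comparison handles cases (i) and (iii) as in the paper, and your final contradiction argument combining \eqref{leq}, \eqref{leqnormphi} and \eqref{ext_crit} is precisely the content of Corollaries \ref{ngb} and \ref{l1blowup}. The only cosmetic differences are that the paper justifies the mass identity by testing with purely temporal cutoffs $\chi(t)$ (exploiting that $\na(u\varphi)=\varphi\na u$ when $\varphi$ is spatially constant and $u(\cdot,t)\in W_0^{1,2}(\Om)$), whereas you use spatial cutoffs plus Hardy's inequality, and that in case (i) you obtain an explicit $1/t$ decay where the paper argues by contradiction that the limit of $m$ must vanish.
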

\begin{rem}
The statement $(ii)$ of Theorem \ref{thm:longterm} says that if the initial data $u_0$ is a probability measure then we have conservation of probability in time. This is actually a desired feature of the replicator dynamics model described by \eqref{0}, since $u(\cdot,t)$ stands for a probability distribution of the state of some population of players, see also Appendix A.
\end{rem}

In the situation of Theorem \ref{thm:longterm} (iii) when finite-time blow-up occurs, understanding
the solution behaviour near the respective blow-up time necessarily requires to describe the set of all points
where the solution becomes unbounded.
Accordingly, we shall next be concerned with the blow-up set
\bas
	\mathcal{B} = \Big\{x\in\Ombar &\Big|&
	\mbox{there exists a sequence $(x_k,t_k)_{k\in\N} \subset \Omega\times (0,\Tmax)$ such that } \\[0mm]
	& & x_k\to x, t_k\to\Tmax \mbox{ and } u(x_k,t_k)\to \infty
	\mbox{ as } k\to\infty \Big\}
\eas
of exploding solutions.
In numerous related equations, involving either linear or degenerate diffusion,
blow-up driven by local superlinear production terms is known to occur in thin spatial sets only, in radial settings typically
reducing to single points (\cite{friedman_mcleod, giga_kohn, SGKM}),
with only few exceptional situations detected
in the literature which lead to {\it regional} or even {\it global} blow-up, thus referring to
cases in which $|{\mathcal B}|>0$ or even ${\mathcal B}=\Ombar$
(cf.~\cite{friedman_mcleod_degenerate, galaktionov_vazquez, lacey, stinner_win07, win_BU_JDE03}, for instance).
In cases of sources which at least partially consist of nonlocal terms, blow-up in sets of positive measure may occur if the relative size of a
possibly contained local contribution at large densities is predominant, as compared to the strength of the respective diffusion term  (\cite{du_xiang, liang_li, liu_li_gao, souplet_nonlocnonlinsource, souplet_unifbuprofile,wangwang}).\abs
Our main result in this direction will reveal that
any of our non-global solutions in fact blow up globally in space, thus indicating a certain balance in the
competition of diffusion and nonlocal production in (\ref{0}):
\begin{thm}\label{thm:blowupset}
  Suppose that $\io u_0\, dx >1$, and let $u$ denote the locally positive weak solution of \eqref{0}
  from Theorem \ref{thm:Tmax}.
  Then $u$ blows up globally in the sense that its blow-up set satisfies $\mathcal{B}=\Ombar$.
\end{thm}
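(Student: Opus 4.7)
The plan is to show $\Omega\subset\mathcal{B}$, whence closedness of $\mathcal{B}$ in $\overline\Omega$ (immediate from its definition via a diagonal extraction) together with $\mathcal{B}\subset\overline\Omega$ forces $\mathcal{B}=\overline\Omega$. My strategy is to dominate $u$ from below by an explicit separated subsolution which blows up pointwise on any prescribed compactly contained subdomain.

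As preparation I would establish that $\phi(t):=\int_\Omega |\nabla u(\cdot,t)|^2\,dx$ diverges as $t\nearrow T_{\max}$. Formally testing (\ref{0w}) against $\varphi\equiv 1$ — justified by suitable boundary cutoffs combined with $u|_{\partial\Omega}=0$ — yields the identity $\frac{d}{dt}\int_\Omega u=(\int_\Omega u-1)\,\phi(t)$. Since $\int_\Omega u_0>1$, this mass is nondecreasing and by Theorem~\ref{thm:longterm}(iii) runs off to $+\infty$, so $\int_0^{T_{\max}}\phi\,dt=\infty$. Combining with the Poincaré bound $(\int_\Omega u)^2\le C\,\phi(t)$ upgrades this to the pointwise divergence $\phi(t)\to\infty$.

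Next, for an arbitrary smoothly bounded $\omega\subsubset\Omega$, let $\Phi_\omega$ solve $-\Delta\Phi_\omega=1$ in $\omega$ with $\Phi_\omega|_{\partial\omega}=0$, extended by zero to $\Omega$. I would set $\underline u(x,t):=\tilde f(t)\,\Phi_\omega(x)$. A direct computation — using $-\Delta\Phi_\omega=1$ and $\Phi_\omega|_{\partial\omega}=0$ to derive, after one integration by parts, the identity $\int_\omega \nabla\Phi_\omega\cdot\nabla(\Phi_\omega\varphi)\,dx=\int_\omega\Phi_\omega\varphi\,dx$ — shows that $\underline u$ obeys (\ref{0w}) with the same $\phi$ as $u$ precisely when $\tilde f$ solves the Bernoulli ODE $\tilde f'=\tilde f\phi-\tilde f^2$. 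Via $w:=1/\tilde f$ this linearises to $w'+\phi w=1$, with the explicit solution
\[
 w(t)=w(0)\,e^{-\Psi(t)}+\int_0^t e^{\Psi(s)-\Psi(t)}\,ds,\qquad \Psi(t):=\int_0^t\phi(r)\,dr,
\]
so L'H\^{o}pital applied to the second term, using $\phi(t)\to\infty$, gives $w(t)\to 0$, i.e.\ $\tilde f(t)\to\infty$ as $t\nearrow T_{\max}$. Choosing $\tilde f(0)>0$ so small that $\tilde f(0)\Phi_\omega\le u_0$ pointwise — possible because (H2) supplies a uniform positive lower bound for $u_0$ on the compact set $\overline\omega$ — a comparison principle for weak solutions of $u_t=u\Delta u+u\phi(t)$ then yields $\underline u\le u$, so $u(x,t)\ge \tilde f(t)\Phi_\omega(x)\to\infty$ for every $x\in\omega$. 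Letting $\omega\uparrow\Omega$ completes the argument.

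I expect the main technical hurdle to be the justification of this weak comparison principle within the framework of Definition~\ref{defi44}. A clean route is the substitution $u=e^{\Psi(t)}v$ together with the time change $d\tau=e^{\Psi(t)}\,dt$, which absorbs the nonlocal term and shows that $v$ and $\underline v:=e^{-\Psi}\underline u$ both solve the classical strongly degenerate equation $v_\tau=v\Delta v$ on $\Omega$ with homogeneous Dirichlet data, for which weak comparison is standard and is, in any case, presumably built into the approximation scheme used to construct $u$ in Theorem~\ref{thm:Tmax}.
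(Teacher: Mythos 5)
Your route is genuinely different from the paper's, and its analytic core is sound: the identity $y'=(y-1)\io|\na u|^2$ from Lemma \ref{totalmass} does give $\int_0^{\Tmax}\io|\na u|^2\,dt=\ln\frac{y(t)-1}{y(0)-1}\big|_{t\to\Tmax}=\infty$, the Poincar\'e bound does upgrade this to essential divergence of $\phi(t)=\io|\na u(\cdot,t)|^2$, and your separated function $\tilde f(t)\Phi_\omega(x)$ with $\tilde f'=\tilde f\phi-\tilde f^2$ is exactly the paper's lower comparison function from Lemma \ref{theo28} (there used with the source dropped, i.e.\ $y'=-y^2$) upgraded to retain the source; the computation that $w=1/\tilde f\to 0$ is correct. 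The genuine gap is the step you defer to a ``standard'' weak comparison principle for $v_\tau=v\Delta v$. No such principle can hold for arbitrary weak solutions of this equation: the paper's introduction explicitly recalls that for $u_t=u^p\Delta u$ with $p\ge 1$ weak solutions are not uniquely determined by their data (\cite{LDalP}), which rules out naive weak comparison. To make your argument rigorous you must run the comparison at the level of the classical approximations $u_\eps$ of \eqref{0eps} (where \cite{Wie} applies), with $\tilde f_\eps'=\tilde f_\eps\,\rho_\eps(\io|\na u_\eps|^2)-\tilde f_\eps^2$, pass to the limit using \eqref{28.999}, and then propagate the lower bound across the gluing times in the extension procedure of the proof of Theorem \ref{thm:Tmax} — at each restart the approximations of $u(\cdot,t_0)$ only satisfy \eqref{a3}, so you must check they still dominate $\tilde f(t_0)\Phi_\omega$ (up to a controllable loss). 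This is real work, not a footnote.

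The paper avoids comparison altogether. It argues by contradiction: if $\mathcal{B}\subsetneq\Ombar$, pick a smooth $\Om'\subsubset\Om\setminus\mathcal{B}$ on which $u$ stays bounded, and test \eqref{0w} with $\varphi=\phi/u$, where $-\Delta\phi=1$ in $\Om'$, $\phi|_{\partial\Om'}=0$ (admissible by local positivity). Integration by parts and $\partial_\nu\phi\le 0$ on $\partial\Om'$ yield
\begin{equation*}
\int_0^t\int_{\Om'}u+\int_{\Om'}\phi\ln u(\cdot,t)-\int_{\Om'}\phi\ln u_0\;\ge\;\Big(\int_{\Om'}\phi\Big)\int_0^t\io|\na u|^2,
\end{equation*}
whose left side is bounded by the assumed boundedness of $u$ on $\Om'$, while the right side equals $C_{\Om'}\ln\frac{y(t)-1}{y(0)-1}\to\infty$ — the same divergence you use. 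So both proofs hinge on the identity $\int_0^t\io|\na u|^2=\ln\frac{y(t)-1}{y(0)-1}$; the paper converts it into an upper bound that a bounded region cannot sustain (a purely variational argument requiring no comparison principle and no pointwise lower bound), whereas your version, once the comparison step is properly justified through the approximation scheme, buys strictly more: an explicit pointwise divergence rate $u(x,t)\gtrsim\tilde f(t)\,\Phi_\omega(x)$ on every compact subset, not merely the set-theoretic statement $\mathcal{B}=\Ombar$.
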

The outline of the paper is as follows. In Section \ref{s2} we introduce an approximate sequence of non-degenerate problems
and derive some estimates for their solutions $\ue$. Here one key step toward the existence proof will consist in deriving the associated approximate variant of \eqref{theestimate} (Lemma \ref{thm:control_gradient_l2}), wich will rely on an energy type argument combined with an analysis of the functional $\iop \phi \ln \ue(\cdot,t)$ for $\Om'\subsubset \Om$, $t>0$ and appropriate $\phi$. Another important observation, based on an integral estimate involving certain singular weights (cf. Lemma \ref{uepsbdry} and in particular \eqref{ps1}), will reveal that the functions $\na \ue$ enjoy a favorable strong compactness property with respect to spatio-temporal $L^2$-norms (cf. \eqref{28.99}), rather than merely the respective weak precompactness feature obtained from corresponding boundedness results. In Section \ref{s3} we study an ODE problem associated with the evolution of the total mass of the solution,
and in dependence on whether this total mass initially is equal, less or greater than $1$, we prove global existence and conservation of the total mass, convergence to zero total mass and finite-time blow-up, respectively.
Finally, in Section \ref{s4} we concentrate on the latter case and examine the corresponding blow-up set of the solution,
and we actually prove that any such blow-up occurs globally in space.
\mysection{Weak solutions: existence and approximation}\label{s2}
Following an approach well-established in the context of degenerate parabolic equations, we aim at constructing a solution $u$ to (\ref{0}) as the limit of solutions to certain regularized problems. For this purpose, let us fix a sequence $(\eps_j)_{j\in\N} \subset (0,1)$
such that $\eps_j \searrow 0$ as $j\to\infty$, and a sequence $(u_{0\eps})_{\eps=\eps_j} \subset C^3(\bar\Omega)$ with
the properties
\be{a1}
	u_{0\eps} \ge \eps \ \mbox{in } \Omega, \qquad u_{0\eps}=\eps \ \mbox{on } \pO, \qquad
	\Delta u_{0\eps}=-\io |\nabla u_{0\eps}|^2 \ \mbox{on } \pO\quad \mbox{ for all } \eps\in(\eps_j)_{j\in\N}
\ee
and
\be{ae}
      \limsup_{\eps=\eps_j\searrow 0} \norm[\Phi,\infty]{u_{0\eps}-\eps}\leq L,
\ee
with $L>\max\set{\io|\na u_0|^2, \norm[\Phi,\infty]{u_0}}$, cf. (H3),
as well as
\be{a3}
	\mbox{for any compact set } K\subset \Om \mbox{ there is }C_K>0 \mbox{ such that } \liminf_{\eps\searrow 0} \inf_K \uen \geq C_K, 
\ee
and such that moreover
\be{a5}
 \uen\to \ue \quad \mbox{in } W^{1,2}(\Om) \qquad \mbox{as }\eps=\eps_j\searrow 0
\ee
and
\be{a6}
 \int \uen = \int u_0 \qquad \mbox{ for all }\eps\in(\eps_j)_{j\in\N}.
\ee
A necessary first observation is that such an approximation actually is possible.
\begin{lem}\label{lem:existenceofapproximation}
 Let $u_0$ satisfy (H1)-(H3).
 Then there is a sequence $(u_{0\eps})_{\eps\in(\eps_j)_{j\in\N}} \subset C^3(\bar\Omega)$ having the properties \eqref{a1}-\eqref{a6}.
\end{lem}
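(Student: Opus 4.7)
The construction proceeds in three stages: an interior smooth approximation of $u_0$, a boundary-layer corrector to enforce the corner compatibility identity, and a small bump restoring the total mass.

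\textbf{Stage 1 (interior piece).} Standard mollification of $u_0$ (extended by zero) combined with a cutoff at distance $\delta_\eps\searrow 0$ from $\pO$ produces $v_\eps\in C^\infty(\Ombar)$ with $\supp v_\eps\subsubset\Om$, $v_\eps\geq 0$, $v_\eps\to u_0$ in $W^{1,2}(\Om)$, and $\norm[\Phi,\infty]{v_\eps}\leq L+o(1)$; by (H2), moreover, for each compact $K\sub\Om$ there is $c_K>0$ with $v_\eps\geq c_K$ on $K$ for $\eps$ small. The $\Phi$-bound uses $u_0\leq L\Phi$ together with $\Phi(x)\geq c\dist(x,\pO)$ (Hopf boundary point lemma applied to \eqref{eq:defPhi}) provided the cutoff width dominates the mollification radius; compactness of $\supp v_\eps$ ensures that $v_\eps$ and all its derivatives vanish near $\pO$.

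\textbf{Stage 2 (boundary-layer correction).} Let $d(x):=\dist(x,\pO)$, which is $C^2$ on a neighborhood of $\pO$, and fix $\kappa\in C^\infty([0,\infty))$ with $\kappa(0)=1$ and $\supp\kappa\sub[0,1)$. For a parameter $\sigma_\eps\searrow 0$ set
\[
 \psi_\eps(x):=\tfrac12 d(x)^2\kappa(d(x)/\sigma_\eps),
\]
extended by zero; a direct computation gives $\psi_\eps|_{\pO}=\na\psi_\eps|_{\pO}=0$, $\Delta\psi_\eps|_{\pO}=|\na d|^2=1$, and $\io|\na\psi_\eps|^2=O(\sigma_\eps^3)$. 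With scalars $\mu_\eps>0$ and $g_\eps\in\R$ to be chosen, try the ansatz
\[
 \tilde u_{0\eps}:=\eps+\mu_\eps\Phi+v_\eps+g_\eps\psi_\eps.
\]
Because $\Delta\Phi\equiv-1$ and $v_\eps$ vanishes near $\pO$, the corner compatibility condition $\Delta\tilde u_{0\eps}|_{\pO}=-\io|\na\tilde u_{0\eps}|^2$ reduces to the scalar equation
\[
 -\mu_\eps+g_\eps=-\io|\mu_\eps\na\Phi+\na v_\eps+g_\eps\na\psi_\eps|^2,
\]
which is quadratic in $g_\eps$ with $g_\eps^2$-coefficient $\io|\na\psi_\eps|^2=o(1)$ and linear coefficient $\to 1$. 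The branch that remains bounded yields $g_\eps=\mu_\eps-\io|\na v_\eps|^2+o(1)$, so $g_\eps=O(1)$ and hence $g_\eps\psi_\eps\to 0$ in $W^{1,2}(\Om)$.

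\textbf{Stage 3 (mass adjustment and verification).} Pick $\zeta\in C_c^\infty(\Om)$ with $\zeta\geq 0$, $\io\zeta=1$, and $\supp\zeta$ contained in a compact subset of $\Om$ where $u_0\geq c_0>0$ (possible by (H2)); set $u_{0\eps}:=\tilde u_{0\eps}+c_\eps\zeta$ with $c_\eps:=\io u_0-\io\tilde u_{0\eps}\to 0$. This preserves the boundary data and the compatibility identity since $\zeta\equiv 0$ near $\pO$, and yields \eqref{a6}. The convergences \eqref{a5}, \eqref{ae} and \eqref{a3} now follow from the corresponding properties of $v_\eps$ together with the smallness of the auxiliary pieces (in particular, on $\supp\psi_\eps$ one has $\psi_\eps/\Phi\leq d/(2c)\to 0$). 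For the lower bound $u_{0\eps}\geq\eps$ choose $\mu_\eps\searrow 0$ with $\sigma_\eps\ll\mu_\eps$: on the strip $\{d\leq\sigma_\eps\}$ we have $v_\eps=\zeta=0$, and the linear term $\mu_\eps\Phi\geq c\mu_\eps d$ dominates the deficit $|g_\eps\psi_\eps|\leq|g_\eps|\sigma_\eps d/2$; off the strip, $v_\eps\geq c_0/2$ on $\supp\zeta$ and $|c_\eps\zeta|\to 0$ uniformly, so $u_{0\eps}\geq\eps$ eventually. The principal obstacle is the tension between the compatibility condition, which forces $\Delta u_{0\eps}|_{\pO}<0$ and hence a concavity profile in the boundary strip, and the pointwise requirement $u_{0\eps}\geq\eps$; the two-scale ansatz with $\sigma_\eps\ll\mu_\eps\to 0$ is precisely what lets both coexist, and the self-reference of the compatibility is dissolved by solving a scalar quadratic whose bounded root approaches $-\io|\na u_0|^2$.
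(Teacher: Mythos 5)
Your overall strategy --- an interior mollified piece, a boundary profile built from $\Phi$, a compactly supported bump to restore the total mass, and a scalar quadratic equation with vanishing leading coefficient $\io|\nabla\psi_\eps|^2$ that resolves the self-referential compatibility condition --- is essentially the same as the paper's, which takes $u_{0\eps}=\eps+C(1-\rho)\Phi+\rho(\varphi+\alpha\vartheta)$ and determines $C$ from a quadratic $AC^2+BC+\Gamma=0$ with $A\to0$, $B\to-1$, $\Gamma\to\io|\nabla u_0|^2$. Your Stages 1 and 2 are sound, and the two-scale choice $\sigma_\eps\ll\mu_\eps$ correctly handles the sign problem created by your corrector coefficient $g_\eps\to-\io|\nabla u_0|^2<0$ (a problem the paper's construction avoids, since there the boundary coefficient $C$ converges to $+\io|\nabla u_0|^2$ and the boundary term is automatically nonnegative).

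Stage 3, however, contains a genuine error: the claim that adding $c_\eps\zeta$ ``preserves the compatibility identity since $\zeta\equiv0$ near $\pO$'' is false. The identity $\Delta u_{0\eps}|_{\pO}=-\io|\nabla u_{0\eps}|^2$ is nonlocal on its right-hand side, and the interior bump changes the Dirichlet integral by $2c_\eps\io\nabla \tilde u_{0\eps}\cdot\nabla\zeta+c_\eps^2\io|\nabla\zeta|^2$, which is generically nonzero; so the function you finally produce violates the third requirement in \eqref{a1}, which is exactly the first-order compatibility condition needed in Lemma \ref{lem58}. The repair is routine and is in effect what the paper does by letting its mass coefficient $\alpha$ depend on $C$ inside the quadratic: since $c_\eps=\io u_0-\io \tilde u_{0\eps}$ is affine in $g_\eps$ (through the term $g_\eps\io\psi_\eps$), substitute it into the compatibility equation written for the full $u_{0\eps}=\tilde u_{0\eps}+c_\eps\zeta$; this again yields a single quadratic in $g_\eps$, now with leading coefficient $\io|\nabla\psi_\eps-(\io\psi_\eps)\nabla\zeta|^2$, which still tends to $0$, and whose bounded root still converges to $-\io|\nabla u_0|^2$, after which all of your remaining estimates go through unchanged.
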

\begin{proof}
By modification of the usual mollification procedure (cf. \cite[Section I §3]{Wloka}) commonly employed to obtain \eqref{a5} it is possible to obtain the other properties as well. More precisely, we set
\[
 \uen = \eps + C (1-\rho)\Phi+\rho(\phii+\alpha\thetaa),
\]
where $\phii\in C_0^\infty(\Om)$ is a mollified version of $u_0$ (after ``locally shifting $u_0$ towards the interior of the domain''), $\rho\in C_0^\infty(\Om)$, $0\leq\rho\leq1$, such that the supports of $\na \rho$ and $\phii$ are disjoint, $0\leq \thetaa\in C_0^\infty$ with $\io \thetaa=1$ (in order to adjust \eqref{a6}), $\Phi$ is the solution to $-\Delta\Phi=1$ in $\Om$, $\Phi=0$ on $\dOm$ (for achieving the third property in \eqref{a1}), and $C$ and $\alpha$ are appropriately adjusted constants, depending on $\eps$ as well as several different integrals containing the functions $\Phi$, $\rho$, $\thetaa$, their gradients, and $u_0$.\\
For a slightly more detailed version of the proof, we refer the reader to the appendix.
\end{proof}

For $\eps\in(\eps_j)_{j\in\N}$, we consider the regularized problem
\be{0eps}
	\left\{ \begin{array}{ll}
	u_{\eps t}=\ue \Delta \ue +  \ue\cdot\rhoeps\big(\io |\nabla\ue|^2\big), \qquad &x\in\Omega, \ t>0, \\[2mm]
	\ue(x,t)=\eps, \qquad &x\in\pO, \ t>0, \\[2mm]
	\ue(x,0)=u_{0\eps}(x), \qquad &x\in\Omega,
	\end{array} \right.
\ee
where
\bas
	\rhoeps(z):=\min \Big\{ z,\ \frac{1}{\eps} \Big\} \qquad \mbox{for } z\ge 0.
\eas
\begin{lem}\label{lem58}
  For all sufficiently small $\eps \in (\eps_j)_{j\in\N}$, problem (\ref{0eps}) has a unique classical global-in-time solution
  $\ue\in C^{2,1}(\Ombar\times[0,\infty))$.
\end{lem}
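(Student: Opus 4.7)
The plan is to view (\ref{0eps}) as a \emph{non-degenerate} quasilinear parabolic problem, since the cutoff $\rhoeps$ renders the nonlocal factor uniformly bounded by $1/\eps$, and as long as $\ue$ remains strictly positive the coefficient $\ue$ of $\Delta \ue$ stays bounded and bounded away from zero. Combined with the smoothness and compatibility of $u_{0\eps}$ encoded in (\ref{a1}), this allows me to invoke standard theory for quasilinear parabolic equations (in the spirit of Ladyzhenskaya--Solonnikov--Ural'tseva or the semilinear-reduction approach of \cite{quittner_souplet}) to obtain a unique local-in-time classical solution $\ue \in C^{2,1}(\Ombar \times [0,T_\eps))$ on a maximal interval $[0,T_\eps)$. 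The third condition in (\ref{a1}) is precisely the first-order compatibility condition at $t=0$ on $\pO$: for $\eps$ small enough (\ref{a5}) gives $\io|\na u_{0\eps}|^2 < 1/\eps$, so $\rhoeps$ acts as the identity and $\uet(\cdot,0)|_{\pO} = \eps\Delta u_{0\eps} + \eps \io|\na u_{0\eps}|^2 = 0$, consistent with the Dirichlet datum $\ue\equiv \eps$ on the parabolic boundary.

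To exclude finite-time breakdown I would establish two-sided pointwise bounds on $\ue$. For the lower bound, the constant function $\uu\equiv\eps$ is a classical sub-solution, because
\[
 \uu_t - \uu\Delta\uu - \uu\rhoeps\Bigl(\io|\na\ue|^2\Bigr) = -\eps\rhoeps\Bigl(\io|\na\ue|^2\Bigr)\le 0,
\]
while $\ue\ge\uu$ on the parabolic boundary. The classical parabolic comparison principle---applicable because $\ue$ is continuous and, on a short initial interval, stays positive by continuity from $u_{0\eps}\ge\eps$---then propagates this to $\ue\ge\eps$ throughout $\Ombar \times [0,T_\eps)$.

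For the upper bound, setting $w_\eps := e^{-t/\eps}\ue$, a direct computation yields
\[
 w_{\eps t} - \ue\Delta w_\eps = w_\eps\Bigl(\rhoeps\Bigl(\io|\na\ue|^2\Bigr)-\tfrac{1}{\eps}\Bigr)\le 0,
\]
so the maximum principle for the linear non-degenerate operator $\partial_t - \ue\Delta$ gives $w_\eps \le \max\{\|u_{0\eps}\|_{\Liom},\eps\}$ and hence
\[
 \ue(x,t)\le e^{t/\eps}\max\{\|u_{0\eps}\|_{\Liom},\eps\}\qquad\mbox{on } \Ombar\times[0,T_\eps).
\]

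With $\ue$ trapped in $[\eps,C(t)]$ on every finite time interval, the PDE is uniformly parabolic with bounded, H\"older-continuous coefficients, so parabolic Schauder estimates control $\ue$ in $C^{2+\alpha,1+\alpha/2}(\Ombar\times[0,T])$ for every $T<T_\eps$, ruling out any blow-up of $\ue$ or its derivatives as $t\nearrow T_\eps$; the standard continuation argument therefore forces $T_\eps=\infty$. Uniqueness follows from a Gronwall estimate on the difference of two hypothetical solutions, exploiting the Lipschitz character of $\rhoeps$ and the positive lower bound $\eps$ on the diffusion coefficient. The only genuine subtlety lies in the lower-bound step, since the coefficient $\ue$ of $\Delta\ue$ could in principle degenerate before positivity is established; this circularity is broken by continuity of $\ue$ at $t=0$ together with $u_{0\eps}\ge\eps$, which ensures positivity on a short interval and then allows a standard continuation to the full existence interval.
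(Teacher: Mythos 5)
Your a priori estimates (the subsolution $\uu\equiv\eps$, the supersolution obtained from $e^{-t/\eps}\ue$, the identification of the third condition in (\ref{a1}) as the first-order compatibility condition, and the Gronwall argument for uniqueness) all match what the paper does and are correct. The genuine gap is in the very first step: you claim that local existence of a classical solution follows from ``standard theory for quasilinear parabolic equations'' in the spirit of Ladyzhenskaya--Solonnikov--Ural'tseva. But (\ref{0eps}) is not a quasilinear parabolic equation in the sense covered by that theory: the source term $\ue\cdot\rhoeps\bigl(\io|\na\ue|^2\bigr)$ depends on the solution through a \emph{nonlocal} functional of its gradient, not pointwise on $(x,t,u,\na u)$. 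The cutoff $\rhoeps$ makes this factor bounded by $1/\eps$ and Lipschitz in its argument, but boundedness does not convert a nonlocal dependence into a local one, and no off-the-shelf existence theorem applies. This is precisely the difficulty the paper's proof is built to overcome: it freezes the nonlocal term as a given H\"older-continuous function $f(t)=\rhoeps\bigl(\io|\na v(\cdot,t)|^2\bigr)$ for $v$ in a suitable convex compact set $S\subset C^{1+\delta/2,\delta/4}(\Ombar\times[0,T])$, solves the resulting genuinely local quasilinear problem by \cite[Thm.\ V.6.1]{LSU}, establishes uniform $C^{\theta,\theta/2}$ and $C^{1+\delta,\delta/2}$ bounds so that the solution map $F$ satisfies $FS\subset S$ and is compact and continuous, and then invokes the Schauder fixed point theorem. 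Without some such fixed-point or iteration scheme (a contraction argument on a short time interval would be an acceptable alternative), your proposal has no existence proof at all, and consequently the continuation argument that is supposed to yield $T_\eps=\infty$ has nothing to continue.

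A secondary, smaller issue: your continuation step presumes an extensibility criterion (``two-sided bounds plus Schauder estimates rule out breakdown'') for the maximal solution of the \emph{nonlocal} problem. Such a criterion is not automatic; it has to be extracted from whatever local theory you build. The paper sidesteps this entirely by running the Schauder fixed-point construction directly on an arbitrary finite interval $[0,T]$, with the bounds $\eps\le\ue\le\|\uen\|_{L^\infty(\Om)}e^{T/\eps}$ built into the definition of the set $S$, so that no continuation argument is needed. If you repair the local existence step via a fixed-point argument, you should either adopt that global-in-$T$ structure or prove the extensibility criterion explicitly.
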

\begin{proof}
  To prove the uniqueness statement for all $\eps$, we assume that both $u_1$ and $u_2$ are classical solutions of (\ref{0eps}) from the indicated class
  in $\Omega \times (0,T)$ for some $T>0$. Then $w:=u_1-u_2$ satisfies $w=0$ on $\pO$ and at $t=0$, and
   \be{eq:2.13b}
 	w_t=u_1 \Delta w + \Delta u_2 \cdot w + \rhoeps \Big( \io |\nabla u_2|^2\Big) \cdot w
 	+u_1 \cdot \Big[ \rhoeps\Big( \io |\nabla u_1|^2 \Big) - \rhoeps\Big( \io |\nabla u_2|^2\Big) \Big]
   \ee
  for $t\in (0,T)$.
  Now given $T' \in (0,T)$, we can find a constant $M>0$ such that $u_1,|\nabla u_1|, u_2$ and $|\nabla u_2|$ are bounded above by $M$ in $\Omega \times (0,T')$, since $u_1,u_2$ are classical solutions. Thus, by H\"older's inequality and the pointwise estimate $\Big| |\nabla u_1|-|\nabla u_2|\Big| \le
  |\nabla (u_1-u_2)|$, we obtain
  \begin{align}\label{eq:2_10a}
	\bigg| \rhoeps\Big( \io |\nabla u_1|^2 \Big) - \rhoeps \Big(\io |\nabla u_2|^2\Big) \bigg|\nn
	\le& \|\rhoeps'\|_{L^\infty((0,\infty))} \cdot \Big| \io \big(|\nabla u_1|^2 -  |\nabla u_2|^2\big) \Big| \nn\\
	\leq& \io \Big||\nabla u_1| -  |\nabla u_2|\Big|\cdot \big(|\nabla u_1| + |\nabla u_2|\big)\nn\\
	\le& 2M \io |\nabla w| \nn \\
	\le& 2M |\Omega|^\frac{1}{2} \cdot \Big( \io |\nabla w|^2 \Big)^\frac{1}{2}
  \end{align}
  for all $t\in (0,T')$, because $\|\rhoeps'\|_{L^\infty((0,\infty))}\leq 1.$
  Upon multiplying \eqref{eq:2.13b} by $w$ and integrating over $\Omega$ we see that for $t\in(0,T')$
  \bea{58}
	\frac{1}{2} \frac{d}{dt} \io w^2
 &=&\io u_1\Delta w w +\io w^2\Delta u_2+\io w^2 \rhoeps\left(\io |\nabla u_2|^2\right) \\&&+ \io wu_1\left[\rhoeps\left(\io |\nabla u_1|^2\right)-\rhoeps\left(\io|\nabla u_2|^2\right)\right]\nn\\
&\leq& - \io u_1|\nabla w|^2 - \io \nabla u_1\nabla w w - 2\io w \nabla w\nabla u_2\nn\\
&& +\io w^2\rhoeps\left(\io |\na u_2|^2\right) + \io |w| u_1\left\lvert \rhoeps\left(\io |\nabla u_1|^2\right)-\rhoeps\left(\io|\nabla u_2|^2\right)\right\rvert\nn.
\eea
Together with Young's inequality, \eqref{eq:2_10a} and the facts that $u_1\geq \eps$ and $\rhoeps(s)\leq \frac1\eps$ for all $s>0$, this entails
\bea{58.0}
\frac{1}{2} \frac{d}{dt} \io w^2&\leq&-\eps\io |\nabla w|^2 + \frac\eps4\io |\nabla w|^2+\frac1\eps\io w^2|\nabla u_1|^2 +\frac{\eps}2\io |\nabla w|^2+\frac 8\eps\io w^2|\nabla u_2|^2\nn\\
&& + \frac1\eps \io w^2+ 2M|\Omega|^{\frac12} \left(\io |\nabla w|^2\right)^\frac12 \io |w|u_1\nn
\eea
for $t\in(0,T')$. The choice of $M$ now ensures that
\begin{align}\label{58.1}
\frac{1}{2} \frac{d}{dt} \io w^2
\leq& -\frac\eps4\io |\nabla w|^2 +\frac{M^2}\eps\io w^2 +\frac {8M^2}\eps\io w^2 + \frac1\eps \io w^2\nn\\
&+ 2M|\Omega|^{\frac12} \left(\io |\nabla w|^2\right)^\frac12 \left(\io |w|^2\io u_1^2\right)^\frac12\nn\\
\leq& -\frac\eps4\io |\nabla w|^2 +\frac {9M^2+1}\eps\io w^2 + \frac\eps4 \io |\nabla w|^2+ \frac{4M^4|\Omega|^2}{\eps} \io |w|^2
\end{align}
  for $t\in (0,T')$, so that (\ref{58.1}) finally turns into
  \bas
	\frac{1}{2} \frac{d}{dt} \io w^2
	\le \Big(\frac{9M^2+1}{\eps} + \frac{4M^4 |\Omega|^2}{\eps} \Big) \cdot \io w^2
  \eas
  for all $t\in (0,T')$.

  Integrating this ODI yields that $w\equiv 0$ in $\Omega \times (0,T')$ and hence also in
  $\Omega \times (0,T)$, because $T'<T$ was arbitrary.\abs
  It remains to be shown that for all $T>0$, (\ref{0eps}) is classically solvable in $\Omega \times (0,T)$ provided $\eps$
  is sufficiently small. To this end, fix $T>0$ and
  let $\eps \in (\eps_j)_{j\in\N}$ be so small that $\io|\na \ue|^2<\frac1\eps$, which is possible due to \eqref{a5}.
 By \cite[Thm. V.1.1]{LSU}, there are $K_1>0$ and $\theta>0$ such that any classical solution $w$ to the problem
\[
 w_t=w\Delta w + c(x,t) \mbox{ in } \Om\times[0,T], \qquad w\bdry=\eps,\qquad w(\cdot,0)=\uen
\]
with $c\in L^\infty(\Om\times(0,T))$ fulfilling $0\leq c\leq \frac1\eps\norm[L^\infty(\Om)]{\uen} e^{\frac T\eps}$ which in addition obeys the estimate $\eps\leq w \leq \norm[\infty]{\uen}e^{\frac T\eps}$ satisfies \begin{equation}\label{eq:Holderest1}
\norm[C^{\theta,\frac\theta2}(\Ombar\times {[0,T]})]{w}\leq K_1.
\end{equation}
Fix $\delta>0$. Corresponding to $\theta, K_1$ and $\delta$, there is $K_2$ such that any solution $w$ to
\[
 w_t=a(x,t)\Delta w + b(x,t)\mbox{ in } \Om\times[0,T], \qquad w\bdry=\eps,\qquad w(\cdot,0)=\uen
\]
for some $a\in C^{\theta,\frac\theta2}(\Ombar\times[0,T])$ having the properties $a(x,t)=\eps$ for $(x,t)\in \pO\times[0,T]$, $\eps\leq a\leq \norm[L^\infty]{\uen}e^{\frac T\eps}$, $\norm[C^{\theta,\frac\theta2}(\Ombar\times {[0,T]})]{a} \leq K_1$ and continuous $b$ with $b(x,0)=b_0\in \R$, $\norm[\infty]{b}\leq \frac{K_1}\eps$, by an application of \cite[Thm. 7.4]{Friedman_parabolic} to $w-\uen-tb_0$ fulfils
\begin{equation}\label{eq:Holderest2}
\norm[C^{1+\delta,\frac\delta2}(\Ombar\times {[0,T]})]{w}\leq K_2.
\end{equation}

With this in mind, in the space $X=C^{1+\frac\delta2,\frac\delta4}(\Ombar\times[0,T])$ we consider the set
  \bas
	S:=\Big\{ v\in X \ \Big| \ v \ge \eps \mbox{ in } \Omega \times (0,T), v(\cdot,0)=u_{0\eps} \mbox{ and } \norm[C^{1+\delta,\frac\delta2}(\Ombar\times {[0,T]})]{v}\leq K_2 \Big\},
  \eas
  which is evidently closed, bounded, convex, and compact in $X$. For each $v\in S$, the definition of $\rhoeps$ implies that
  \be{58.11}
	f(t):=\rhoeps \Big( \io |\nabla v(\cdot,t)|^2 \Big), \qquad t\in [0,T],
  \ee
  defines a nonnegative $\frac\delta2$-H\"older continuous function $f$ on $[0,T]$. The choices of $f, S$ and $\eps$ show that $f(0)=\io|\na \uen|^2$ and thus \eqref{a1} ensures that the compatibility condition of first order is satisfied.
  Therefore, the quasilinear, actually non-degenerate parabolic problem
  \be{58.2}
	\left\{ \begin{array}{ll}
	u_{\eps t}=\ue\Delta \ue + f(t)\ue, \qquad &x\in\Omega,\ t>0, \\[2mm]
	\ue(x,t)=\eps, \qquad &x\in\pO, \ t>0, \\[2mm]
	\ue(x,0)=u_{0\eps}(x), \qquad &x\in\Omega,
	\end{array} \right.
  \ee
  possesses a classical solution $\ue\in C^{2,1}(\Ombar\times[0,T])$ by \cite[Thm V.6.1]{LSU}, 
 which, by comparison, satisfies
  \be{58.3}
	\eps \le \ue \le \|u_{0\eps}\|_{L^\infty(\Omega)} \cdot e^{\frac{T}{\eps}} \qquad \mbox{in } \Omega \times (0,T),
  \ee
  because $\uu(x,t):=\eps$ and $\ou(x,t):=\|u_{0\eps}\|_{L^\infty(\Omega)} \cdot e^{\frac{t}{\eps}}$ are easily seen
  to define a sub- and a supersolution of (\ref{58.2}), respectively.\\
 We now introduce a mapping $F\colon S\to X$ by setting $Fv:=\ue,$ where $\ue$ solves \eqref{58.2} with \eqref{58.11}.

 Then defining $c(x,t):=\ue(x,t)f(t)$, $x\in\Om, t\in [0,T]$, this function satisfies $\norm[\infty]{c}\leq \frac1\eps \norm[\infty]{\uen}e^{\frac T\eps}$ and accordingly, as stated in \eqref{eq:Holderest1} above, $\norm[C^{\theta,\frac\theta2}]{Fv}\leq K_1$ for any $v\in S$.\\
 Using $a(x,t):=(Fv)(x,t)$ and $b(x,t):= (Fv)(x,t)\cdot f(t)$, we see that, again, the above considerations are applicable and $\norm[C^{1+\delta,\frac\delta2}(\Ombar\times {[0,T]})]{Fv}\leq K_2$ for any $v\in S$ by \eqref{eq:Holderest2}. In particular, we observe that $FS\subset S$.

Furthermore invoking \cite[IV.5.2]{LSU}, we can conclude the existence of $k>0$ and $K_3>0$ such that

\begin{equation}\label{eq:relcpt}
 \norm[C^{2+\delta,1+\frac\delta2}(\Ombar\times{[0,T]})]{Fv}\leq k\left(\norm[C^{\delta,\frac\delta2}(\Ombar\times {[0,T]})]{Fv\cdot f}+ \norm[C^{2+\delta}(\Ombar\times {[0,T]})]{\uen} + \eps\right) \leq K_3
\end{equation}
for all $v\in S$.
To see that $F$ is continuous, we suppose that $(v_k)_{k\in\N} \subset S$ and $v\in S$ are such that
  $v_k \to v$ in $X$. Then $f_k(t):=\rhoeps \big( \io |\nabla v_k(\cdot,t)|^2 \big)$
  satisfies
  \be{58.5}
	f_k \to f \qquad \mbox{in } C^0([0,T])
  \ee
  as $k\to\infty$, with $f$ as given by (\ref{58.11}). By \eqref{eq:relcpt} and the theorem of Arzel\`a-Ascoli, $(F v_k)_{k\in\N}$ is relatively compact in $C^{2,1}(\Ombar\times[0,T])$, and if
  $k_i\to\infty$ is any sequence such that $u_{k_i}:=Fv_{k_i}$ converges in $C^{2,1}(\Ombar\times[0,T])$ to some $w$ as $i\to\infty$, then in
  \bas
	\partial_t u_{k_i} = u_{k_i} \Delta u_{k_i} + f_{k_i}(t) u_{k_i}, \qquad x\in \Omega, \ t\in (0,T),
  \eas
  we may let $k_i\to\infty$ and use (\ref{58.5}) to obtain that $w$ is a classical solution
  of (\ref{58.2}). Since classical solutions of
  (\ref{58.2}) are unique due to the comparison principle, we must have $w=Fv$. We thereby derive that the whole sequence
  $(Fv_k)_{k\in\N}$ converges to $Fv$ and hence conclude that $F$ is continuous. Therefore the Schauder fixed point theorem
  asserts the existence of at least one $\ue \in S$ for which $\ue=F\ue$ holds. Since such a fixed point obviously solves (\ref{0eps}),
  the proof is complete.	
\end{proof}	
The basis of both our existence proof and our boundedness result is formed by the next two lemmata which provide
useful a priori estimates for $\ue$ in terms of certain presupposed bounds.
The first lemma essentially derives a uniform pointwise bound for $\ue$ from a space-time integral estimate for $|\nabla\ue|^2.$
\begin{lem}\label{lem34}
  For all $M>0$ and $B>0$ there exists $C(M,B)>0$ with the following property:
  If
  \be{34.1}
	u_{0\eps} \le M \quad \mbox{in } \Omega \qquad \mbox{and} \qquad \int_0^T \io |\nabla \ue|^2 \le B
  \ee
  holds for some $\eps\in(\eps_j)_{j\in\N}$ and $T \in (0,\infty]$ then we have
  \be{34.2}
	\ue \le C(M,B) \qquad \mbox{in } \Omega \times [0,T).
  \ee
\end{lem}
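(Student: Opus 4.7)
The plan is to construct an explicit spatially constant supersolution and then apply the maximum principle, exploiting the fact that the approximating problem \eqref{0eps} is non-degenerate since $\ue\ge\eps$.

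Set $R(t):=\rhoeps\bigl(\io|\nabla\ue(\cdot,t)|^2\bigr)$ for $t\in(0,T)$. The definition of $\rho_\eps$ together with the second part of \eqref{34.1} immediately yields
\[
 \int_0^T R(s)\,ds \;\le\; \int_0^T \io|\nabla\ue|^2\,dx\,ds \;\le\; B,
\]
so $\int_0^t R(s)\,ds\le B$ throughout $(0,T)$. Define the candidate supersolution
\[
 \ou(t) \,:=\, M\,\exp\!\Bigl(\int_0^t R(s)\,ds\Bigr), \qquad t\in[0,T),
\]
which is independent of $x$ and satisfies $\ou\le Me^B$ as well as $\ou(0)=M\ge \uen$ and $\ou(t)\ge M\ge \eps$ on $\pO$ by virtue of the first part of \eqref{34.1} and \eqref{a1}.

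Since $\Delta\ou=0$, a direct computation shows that $\ou$ solves $\ou_t=\ou\Delta\ou+R(t)\ou$ identically. Setting $\phi:=\ou-\ue$, one then finds that
\[
 \phi_t \;=\; R(t)\phi \,+\, \ue\,\Delta\phi \qquad\text{in }\Omega\times(0,T),
\]
which is a \emph{uniformly parabolic, linear} equation for $\phi$, the coefficient $\ue$ of $\Delta\phi$ being bounded away from zero (by $\eps$) and Hölder continuous on $\overline\Omega\times[0,T']$ for every $T'<T$ thanks to Lemma \ref{lem58}. The boundary and initial data satisfy $\phi|_{\pO}=\ou-\eps\ge 0$ and $\phi|_{t=0}=M-\uen\ge 0$.

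The classical maximum principle applied to $\phi$ on $\Omega\times(0,T')$ therefore yields $\phi\ge 0$, that is $\ue\le \ou\le Me^B$, on any compact subinterval and hence on all of $\Omega\times[0,T)$. This proves \eqref{34.2} with the explicit choice $C(M,B):=Me^B$. The main (modest) point to keep track of is that although the limit equation \eqref{0} is strongly degenerate, the comparison argument applies at the approximate level where $\ue\ge\eps>0$ so that the linearised equation for $\phi$ is non-degenerate; consequently the bound is uniform in $\eps\in(\eps_j)_{j\in\N}$ and depends only on $M$ and $B$.
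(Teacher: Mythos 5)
Your proof is correct, but it takes a somewhat different and in fact more elementary route than the paper. The paper also argues by comparison at the level of the non-degenerate approximate problem, but it uses the separated supersolution $\ou(x,t)=y(t)\,(M+\Phi(x))$ with $\Phi$ solving $-\Delta\Phi=1$, where $y$ solves the Riccati-type ODE $y'=-y^2+(f(t)+1)y$, $y(0)=1$, with $f(t)=\io|\nabla\ue(\cdot,t)|^2$; the bound $1\le y\le e^{B+1}$ is then extracted from the explicit formula for $z=1/y$, leading to the constant $e^{B+1}\bigl(M+\|\Phi\|_{L^\infty(\Omega)}\bigr)$. You instead take the spatially constant supersolution $\ou(t)=M\exp\bigl(\int_0^t R\bigr)$, which satisfies the comparison equation \emph{exactly} because $\Delta\ou=0$, and you obtain the cleaner constant $Me^B$ with no ODE analysis and no auxiliary function $\Phi$. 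All the ingredients you use are available: $\eps\le u_{0\eps}\le M$ gives the ordering on the parabolic boundary, $\rhoeps(z)\le z$ gives $\int_0^t R\le B$, and your explicit linearisation $\phi_t=\ue\Delta\phi+R(t)\phi$ (uniformly parabolic since $\ue\ge\eps$, with $R$ bounded on compact time intervals because $\ue$ is classical) justifies the maximum principle in a self-contained way, whereas the paper simply invokes ``the comparison principle.'' The only thing the paper's more elaborate ansatz buys is uniformity of the method with the other comparison arguments in the article (Lemma \ref{uepsbdry}, Lemma \ref{lem:normPhiinftyueps}), which all use multiples of $\Phi$; for the present lemma your version suffices.
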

\begin{proof}
  Our plan is to use a separated function of the form
  \be{34.3}
	\ou(x,t):=y(t) \cdot (M+\Phi(x)), \qquad x\in\bar\Omega, \ t\in [0,T),
  \ee
  as a comparison function, where $M$ is as in the hypothesis of the lemma, $\Phi \in C^2(\bar\Omega)$ is the solution of \eqref{eq:defPhi},
  and $y$ denotes the solution of
  \be{34.5}
	y'=-y^2+\big( f(t)+1 \big) \cdot y, \quad t\in (0,T), \qquad 
	y(0)=1,
  \ee
  with $f(t):=\io |\nabla \ue(\cdot,t)|^2$. In fact, it follows from (\ref{34.5}) that $z:=\frac{1}{y}$ is a solution of
  $z'=1-(f(t)+1)z$, $z(0)=1$, and hence given by
  \bas
	z(t)=e^{-\int_0^t f(s)ds - t} + \int_0^t e^{-\int_s^t f(\sigma)d\sigma-(t-s)}ds,\qquad t\in[0,T).
  \eas
  We claim that
  \be{34.6}
	1 \le y(t) \le e^{B+1} \qquad \mbox{for all } t\in (0,T).
  \ee
  To see this, we note that if $t\in (0,T)$ satisfies $t<1$, then (\ref{34.1}) implies
  \bas
	z(t) &\ge& e^{-\int_0^t f(s)ds - t}
	\ge e^{-B - t} \ge e^{-B - 1}, 
  \eas
  whereas if $t\in [1,T)$ then again (\ref{34.1}) shows
  \bas
	z(t)  &\ge& \int_{t-1}^t e^{-\int_s^t f(\sigma)d\sigma-(t-s)}ds \ge \int_{t-1}^t e^{-B-(t-s)}ds \\
	&\ge& \int_{t-1}^t e^{-B - 1}ds = e^{-B - 1}.
  \eas
  This yields the right inequality in (\ref{34.6}), while the left immediately results from an ODE comparison of $y$ with
  $\underline{y}(t) \equiv 1$, because $\underline{y}'+\underline{y}^2-(f(t)+1)\underline{y}=-f(t) \le 0$.
  Consequently, since $\Phi \ge 0$ in $\Omega$, the function $\ou$ defined by (\ref{34.3}) satisfies
  \bas
	\ou(x,0) = M+\Phi(x) \ge M \ge \ue(x,0) \qquad \mbox{for all } x\in\Omega
  \eas
 due to \eqref{34.1}, and on the lateral boundary we have
  \bas
	\ou(x,t) = y(t) \cdot M \ge M \ge \eps \qquad \mbox{for all } x\in\pO \mbox{ and } t\in (0,T).
  \eas
  Moreover,
  \bas
	\ou_t - \ou \Delta \ou - f(t) \cdot \ou
	&=& y' \cdot (M+\Phi) + y^2 \cdot (M+\Phi) - f(t) y \cdot (M+\Phi) \\
	&=& y \cdot (M+\Phi) \\
	&\ge& 0 \qquad \mbox{for all } x\in\Omega \mbox{ and } t \in (0,T),
  \eas
  whence the comparison principle ensures that $\ue \le \ou$ in $\Omega \times (0,T)$. In view of (\ref{34.6}), this entails that
  \bas
	\ue(x,t) \le e^{B+1} \cdot \big(M+\|\Phi\|_{L^\infty(\Omega)}\big)
	\qquad \mbox{for all } x\in\Omega \mbox{ and } t \in (0,t),
  \eas
  so that (\ref{34.2}) is valid upon an obvious choice of $C=C(M,B)$.
\end{proof}
Next, the fact that solutions of (\ref{0eps}) cannot blow up immediately can be turned into a quantitative local-in-time
boundedness estimate in terms of the norm of the initial data in $L^\infty(\Omega) \cap W^{1,2}(\Omega)$.
Moreover, our technique at the same time yields an estimate involving integrals of $u_{\eps t}$ and $\nabla \ue$,
as long as $\ue$ is appropriately bounded.
\begin{lem}\label{lem26}
  i) \ For all $M>0$ there exist $T_1(M)>0$ and $C_1(M)>0$ such that if
  \be{26.00}
	u_{0\eps} \le M \quad \mbox{in } \Omega \qquad \mbox{and} \qquad \io |\nabla u_{0\eps}|^2 \le M
  \ee
  hold for some $\eps \in (\eps_j)_{j\in\N}$, then
  \be{26.01}
	\ue \le C_1(M) \qquad \mbox{in } \Omega \times [0,T_1(M)).
  \ee
 ii) \ For each $M>0$ and $T>0$ there exist $T_2(M)\in(0,T]$ and  $C_2(M)>0$ such that whenever $\eps \in (\eps_j)_{j\in\N}$ is such that
  \be{26.03}
	\ue \le M \qquad \mbox{in } \Omega \times (0,T) \qquad \mbox{and} \qquad \io |\nabla u_{0\eps}|^2 \le M
  \ee
  are satisfied, then
  \be{26.02}
	\int_0^{T_2(M)} \io \frac{u_{\eps t}^2}{\ue} + \sup_{t\in (0,T_2(M))} \io |\nabla \ue(\cdot,t)|^2 \le C_2(M).
  \ee
\end{lem}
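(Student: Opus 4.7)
\textbf{Proof plan for Lemma~\ref{lem26}.}

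My plan is to derive both parts from a single energy identity, obtained via the positivity $\ue\ge\eps>0$ ensured by Lemma~\ref{lem58}. Dividing the PDE in (\ref{0eps}) by $\ue$, multiplying by $u_{\eps t}$, integrating over $\Omega$, and using that $u_{\eps t}|_{\pO}=0$ (because $\ue\equiv\eps$ on $\pO$), integration by parts yields
\[
  \io\frac{u_{\eps t}^2}{\ue}+\frac{1}{2}\ddt\io|\na\ue|^2 \;=\; \rhoeps\!\Big(\io|\na\ue|^2\Big)\io u_{\eps t}.
\]
Cauchy--Schwarz, the pointwise bound $\rhoeps(z)\le z$, Young's inequality, and absorption of half of the dissipative term then leave
\[
  \tfrac12\io\frac{u_{\eps t}^2}{\ue}+\tfrac12 y'(t)\;\le\;\tfrac12 |\Omega|\,\|\ue(\cdot,t)\|_{L^\infty}\, y(t)^2,\qquad y(t):=\io|\na\ue(\cdot,t)|^2.
\]

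For part (ii), the hypothesis $\ue\le M$ turns this into $y'\le M|\Omega|y^2$ with $y(0)\le M$. Separation of variables yields $y(t)\le M/(1-M^2|\Omega|t)$, so $y\le 2M$ as long as $t\le(2M^2|\Omega|)^{-1}$. Setting $T_2(M):=\min\{T,(2M^2|\Omega|)^{-1}\}$ delivers the sup bound on $\io|\na\ue|^2$, and integrating the energy inequality over $[0,T_2(M)]$ controls $\int_0^{T_2(M)}\io u_{\eps t}^2/\ue$ by $M|\Omega|\int_0^{T_2(M)}y^2+y(0)$, which is bounded by an $M$-dependent constant.

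For part (i), where no ambient $L^\infty$ bound on $\ue$ is given a priori, I would run a continuity bootstrap. With $M_0:=2e^2(M+\|\Phi\|_{L^\infty})$, set
\[
  \tau^*\;:=\;\sup\bigl\{\tau\ge0:\ \ue\le M_0\ \mbox{on}\ \Omega\times[0,\tau]\bigr\},
\]
which is strictly positive thanks to $\uen\le M<M_0$ and the continuity granted by Lemma~\ref{lem58}. Assume for contradiction $\tau^*<T_1(M)$ for a suitable $T_1(M)$. On $[0,\tau^*]$ the energy inequality above gives $y'\le M_0|\Omega|y^2$; provided $T_1(M)\le(2MM_0|\Omega|)^{-1}$, ODE comparison yields $y\le 2M$, and provided further $T_1(M)\le(2M)^{-1}$, we have $\int_0^{\tau^*}y\le 1$. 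Lemma~\ref{lem34} applied with $B=1$ then produces $\ue\le e^{1+1}(M+\|\Phi\|_{L^\infty})=M_0/2$ on $\Omega\times[0,\tau^*]$, which by continuity propagates to an interval strictly past $\tau^*$ and contradicts its maximality.

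The main obstacle is closing the bootstrap in (i): Lemma~\ref{lem34} delivers a bound of the shape $e^{B+1}(M+\|\Phi\|_{L^\infty})$ whose exponent $B$ depends quadratically (through the ODE for $y$) on the ambient $L^\infty$ bound presupposed on $\ue$, so $M_0$ must be chosen \emph{strictly} larger than what Lemma~\ref{lem34} returns at the end of the argument. The factor~$2$ in $M_0=2e^2(M+\|\Phi\|_{L^\infty})$ encodes precisely this buffer and is what enables the strict inequality $\ue\le M_0/2<M_0$ needed to invoke continuity; without such margin the bootstrap would merely reproduce $\ue\le M_0$ and maximality of $\tau^*$ could not be refuted.
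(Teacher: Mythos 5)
Your proof is correct, but for part (i) it takes a genuinely different route from the paper. Both arguments start from the same energy identity
$\io \frac{u_{\eps t}^2}{\ue}+\frac12\ddt\io|\na\ue|^2=\rhoeps\big(\io|\na\ue|^2\big)\io u_{\eps t}$
and the same Cauchy--Schwarz/Young absorption, and part (ii) is then essentially identical in both treatments (the paper takes $T_2=\min\{T,1/(M^2|\Omega|)\}$, where your extra factor $2$ is in fact the safer choice for the sup bound). The divergence is in how the factor $\io\ue$ is handled when no ambient $L^\infty$ bound is assumed: you estimate $\io\ue\le|\Omega|\|\ue\|_{L^\infty(\Omega)}$ and are then forced into the continuity bootstrap with the buffer constant $M_0$, which in turn requires unwinding the proof of Lemma \ref{lem34} to know the explicit constant $e^{B+1}(M+\|\Phi\|_{L^\infty(\Omega)})$ (alternatively you could set $M_0:=2\max\{C(M,1),M\}$ abstractly and keep Lemma \ref{lem34} as a black box). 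The paper instead applies the Poincar\'e inequality to $\ue-\eps\in W_0^{1,2}(\Omega)$ to obtain $\io\ue\le c_1\big((\io|\na\ue|^2)^{1/2}+1\big)$, which closes the differential inequality as $y'\le c_1(\sqrt{y}+1)y^2$ with no reference to $\|\ue\|_{L^\infty(\Omega)}$ at all; a short-time ODE comparison then bounds $\int_0^{T_1}y$ directly, and Lemma \ref{lem34} is invoked only once at the end. Your bootstrap is sound and yields the same conclusion, but it is more delicate (the margin in $M_0$ must strictly dominate the output of Lemma \ref{lem34}); the Poincar\'e step is the simplification that removes the need for it.
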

\begin{proof}
  i) \ We multiply (\ref{0eps}) by $\frac{u_{\eps t}}{\ue}$ and integrate by parts, use that $u_{\eps t}=0$ on $\pO$, and apply H\"{o}lder's together with Young's inequality to see that
  \bea{26.1}
	\io \frac{u_{\eps t}^2}{\ue} + \frac{1}{2} \frac{d}{dt} \io |\nabla \ue|^2
	&=& \left( \io u_{\eps t} \right) \cdot \rhoeps \left( \io |\nabla \ue|^2 \right) \nn\\
	&\le& \left(  \io \frac{u_{\eps t}^2}{\ue} \right)^\frac{1}{2} \left(  \io \ue \right)^\frac{1}{2}
		 \io |\nabla \ue|^2  \nn\\
	&\le& \frac{1}{2} \io \frac{u_{\eps t}^2}{\ue} + \frac{1}{2} \left(  \io \ue \right) \left(  \io |\nabla \ue|^2 \right)^2
  \eea
  for all $t>0$, because $\rhoeps(z) \le z$ for all $z \ge 0$. Hence,
  \be{26.11}
	\io \frac{u_{\eps t}^2}{\ue} + \frac{d}{dt} \io |\nabla \ue|^2
	\le  \left( \io \ue \right) \left( \io |\nabla \ue|^2 \right)^2.
  \ee
  Using the Poincar\'e inequality
, we obtain
  \bas
	\io \ue(\cdot,t) \le c_1\cdot \left( \left(\io |\nabla \ue(\cdot,t)|^2\right)^\frac{1}{2} + 1 \right)
  \eas
  with a positive constant $c_1$ independent of $\eps\in(\eps_j)_{j\in\N} \in (0,1)$ and $t>0$. Therefore, (\ref{26.11}) yields
  \be{26.2}
	\io \frac{u_{\eps t}^2}{\ue} + \frac{d}{dt} \io |\nabla \ue|^2
	\le c_1\cdot \left( \left(\io |\nabla \ue|^2\right)^\frac{1}{2} + 1 \right)
	\left( \io |\nabla \ue|^2 \right)^2,
  \ee
  which in particular implies that $y(t):=\io |\nabla \ue(\cdot,t)|^2$ satisfies
  \bas
	y'(t) \le c (\sqrt{y}+1)y^2 \qquad \mbox{for all } t>0.
  \eas
  Hence, if we let $z$ denote the local-in-time solution of
  \bas
	\left\{ \begin{array}{l}
	z'(t)=c(\sqrt{z}+1)z^2, \qquad t>0, \\[1mm]
	z(0)=M,
	\end{array} \right.
  \eas
  with maximal existence time $T_z>0$, then due to (\ref{26.00}) and an ODE comparison we have $y \le z$ in $(0,T_z)$.
  Defining $T_1(M):=\frac{1}{2} T_z$, for instance, we obtain from this that $\io|\nabla \ue(\cdot,t)|^2 \le z(T_1(M))$
  for all $t \in [0,T_1(M))$, whereupon (\ref{26.01}) now results from Lemma \ref{lem34}.\\
  ii) \ If the first inequality in (\ref{26.03}) holds then (\ref{26.11}) entails that $y$ as defined above even
  satisfies the nonlinear ODI
  \bas
	y'(t) \le M|\Omega| y^2 \qquad \mbox{for all } t>0,
  \eas
  whence we have $\io |\nabla \ue(\cdot,t)|^2 \le \frac{1}{M^{-1}-M|\Omega|t}$ for all $t \in (0,T_2)$ with $T_2:=\min\{T, 1/(M^2 |\Omega|)\}$, by the second inequality in (\ref{26.03}).  Inserting this into (\ref{26.2}) again and integrating over $(0,T_2)$ proves (\ref{26.02}).
\end{proof}
When constructing the solution $u$ of \eqref{0} as the limit of solutions $\ue$ of \eqref{0eps}, it will be comparatively easy to obtain the approximation property $\na\ue\to\na u$ in the sense of $L^2_{loc}(\Om\times[0,T))$-convergence. For handling the nonlocal term in the equation, however, it seems appropriate to make sure that also $\io |\na \ue|^2\to \io |\na u|^2$ in $L^1_{loc}([0,T))$.\\
In order to achieve the latter we exclude certain boundary concentration phenomena of $\na \ue$ in the following sense.
\begin{lem}\label{uepsbdry}
 For any $T>0$, $C>0$, $M>0$ and $\delta>0$, there is $K=K(M,C,T,\delta)\subset\subset \Om$ and $\eta>0$ such that whenever $\eps\in (\eps_j)_{j\in\N}$ is such that $\eps<\eta$ and
\begin{equation}\label{eq:bdcondp14}
 \sup_{t\in [0,T]} \io |\nabla \ue(t)|^2 \leq C \quad\mbox{and}\quad \ue\leq M,
\end{equation}
we have
\[
 \int_0^T\int_{\Om\setminus K} |\na \ue|^2<\delta.
\]
\end{lem}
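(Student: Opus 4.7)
The plan is to test the regularized equation against a cutoff $\chi$ which equals $1$ in a thin boundary layer and whose transition region has \emph{logarithmic} rather than linear width, so that the error term produced by integration by parts can be controlled by Hardy's inequality. Given $0<\mu<\mu_0$, I set $A:=\ln(\mu_0/\mu)$ and
\[
 \chi(x):=\rho\!\left(\ln\frac{\mu_0}{\Phi(x)}\right),
\]
where $\rho\in C^\infty(\R,[0,1])$ is nondecreasing with $\rho\equiv 0$ on $(-\infty,0]$, $\rho\equiv 1$ on $[A,\infty)$ and $|\rho'|\leq c_0/A$, $|\rho''|\leq c_0/A^2$. Then $\chi\in C^\infty(\Ombar)$, $\chi\equiv 1$ on $\{\Phi\leq\mu\}$, $\chi\equiv 0$ on $\{\Phi\geq\mu_0\}$, and a direct computation based on $-\Delta\Phi=1$ yields $|\Delta\chi|\leq c(\Om)A^{-1}/\Phi^2$ in the transition region $\{\mu\leq\Phi\leq\mu_0\}$.

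Setting $v_\eps:=\ue-\eps\in W_0^{1,2}(\Om)$, I test \eqref{0eps} against $\chi v_\eps/\ue=\chi(1-\eps/\ue)$. Since $v_\eps$ vanishes on $\dOm$, all boundary terms drop out, and the further integration by parts $\io v_\eps\na\chi\cdot\na v_\eps=-\tfrac12\io v_\eps^2\Delta\chi$ leads to the identity
\begin{align*}
\int_0^T\!\!\io\chi|\na\ue|^2 &= \io\chi(\uen-\ue(T)) + \eps\io\chi(\ln\ue(T)-\ln\uen) \\
&\qquad + \tfrac12\int_0^T\!\!\io v_\eps^2\Delta\chi + \int_0^T\rhoeps\Big(\io|\na\ue|^2\Big)\io\chi v_\eps.
\end{align*}

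Each term is now estimated uniformly in $\eps$. Using $\eps\leq\ue\leq M$ and $\rhoeps(\io|\na\ue|^2)\leq C$, the non-error contributions on the right are bounded by $M(1+CT)|\{\Phi\leq\mu_0\}|+2\eps|\ln\eps|\cdot|\{\Phi\leq\mu_0\}|$. For the error term, Hardy's inequality (valid since $v_\eps\in W_0^{1,2}(\Om)$ and $\Phi$ is comparable to $\dist(\cdot,\dOm)$) yields $\io v_\eps^2/\Phi^2\leq C_H\io|\na v_\eps|^2\leq C_HC$ uniformly in $\eps$ and $t$, whence $\tfrac12\int_0^T\io v_\eps^2|\Delta\chi|\leq C'T/A$. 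Given $\delta>0$, I therefore first choose $\mu_0$ so small that the $|\{\Phi\leq\mu_0\}|$-terms fall below $\delta/3$, then select $A$ so large that $C'T/A<\delta/3$ (this fixes $\mu=\mu_0e^{-A}$), and finally pick $\eta\leq\mu$ such that $\eps|\ln\eps|\cdot|\{\Phi\leq\mu_0\}|<\delta/3$ for all $\eps<\eta$. With $K:=\{\Phi\geq\mu\}\subsubset\Om$ and $\mathbf{1}_{\{\Phi\leq\mu\}}\leq\chi$, this gives $\int_0^T\int_{\Om\setminus K}|\na\ue|^2<\delta$. The key obstacle is controlling the error $\int v_\eps^2\Delta\chi$: a linear-width cutoff would only give $\|\Delta\chi\|_\infty\sim\mu^{-2}$ and, combined with Hardy, a uniform-in-$\mu$ bound; the logarithmic-width choice supplies exactly the factor $1/A$ producing the required smallness.
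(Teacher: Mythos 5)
Your proof is correct, but it follows a genuinely different route from the one in the paper. The paper tests \eqref{0eps} against the singular weight $\ue^{q-1}$ with $0<q<1$ to obtain the uniform weighted bound $q\intnT\io\ue^{q-1}|\na\ue|^2\le C(T)$ (see \eqref{ps1}), and then invokes the comparison principle with the supersolution $A\Phi+\eta$ to conclude $\ue\le 2\eta$ on $\Om\setminus K$; pulling the factor $\ue^{1-q}\le(2\eta)^{1-q}$ out of the weighted integral then yields the desired smallness. You instead localize directly with a boundary-layer cutoff of logarithmic transition width and absorb the commutator term $\frac12\intnT\io v_\eps^2\Delta\chi$ via Hardy's inequality $\io v_\eps^2/\Phi^2\le C_H\io|\na v_\eps|^2$, which indeed applies because $v_\eps=\ue-\eps\in W_0^{1,2}(\Om)$ and $\Phi$ is comparable to $\dist(\cdot,\dOm)$ by the Hopf lemma; your computation of $\Delta\chi$ using $-\Delta\Phi=1$ and the resulting bound $|\Delta\chi|\le c(\Om)A^{-1}\Phi^{-2}$ on the transition region are correct, and all constants depend only on $M$, $C$, $T$, $\delta$ and $\Om$, as required. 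Each approach has its merits: yours uses only the hypotheses \eqref{eq:bdcondp14} and needs neither the comparison principle nor the bound \eqref{ae} on $\norm[\Phi,\infty]{\uen-\eps}$, which the paper's proof invokes to find the constant $A$ in its supersolution; conversely, the paper's argument is independent of Hardy's inequality and produces an algebraic smallness rate $(2\eta)^{1-q}$ in terms of the height of the boundary layer, as opposed to the merely logarithmic gain $1/A$ in your construction (which, as you note, is exactly what a linear-width cutoff would fail to provide). Two cosmetic remarks: the bound $2\eps|\ln\eps|$ for the term $\eps\io\chi\left(\ln\ue(T)-\ln\uen\right)$ should read $\eps\ln(M/\eps)$, which is dominated by $2\eps|\ln\eps|$ only once $\eps\le 1/M$, and the restriction $\eta\le\mu$ plays no role in your argument and can be dropped.
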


\begin{proof}
For $q\in(0,1)$, we multiply \eqref{0eps} by $\ue^{q-1}$ and integrate by parts to obtain
\[
 \frac1q\frac{d}{dt} \io \ue^q=\intdom \ue^q\dN \ue -\io q\ue^{q-1}|\na\ue|^2+\io\ue^q\;\rhoeps \left( \io |\nabla \ue|^2 \right),
\]
where we can use $\dN \ue\leq 0$ on $\dOm$ and integrate with respect to time to derive
\begin{equation}\label{ps1}
 q\intnT\io\ue^{q-1}|\na\ue|^2\leq-\frac1q\io\ue^q(T)+\frac1q\io\uen^q+\intnT\left(\io\ue^q\io|\na\ue|^2\right)=:C(T)
\end{equation}
for all $\eps>0$ satisfying \eqref{eq:bdcondp14}, which gives control on $|\na\ue|^2$ whereever $\ue$ is small -- which is the case near the boundary, as we ensure next:
In order to lay the groundwork for the corresponding comparison argument, note that by \eqref{eq:bdcondp14},
\bas
 u_{\eps t}=\ue\Delta\ue+\ue\rhoeps\left(\io|\na\ue|^2\right)\leq \ue\Delta\ue+C\ue, \qquad  \ue|_\dOm=\eps, \qquad \ue(0)=\uen.
\eas
Fix $\eta>0$ such that $\frac{(2\eta)^{1-q} C(T)}q < \delta$.
Let $\Phi$ solve \eqref{eq:defPhi}.
Choose $A>C$ such that $A\Phi+\eta>\uen$ for all $0<\eps<\eta$, which is possible due to condition \eqref{ae}. 
Then $\ou:=A\Phi+\eta$ satisfies
 \be{eq:uepsbdry_ou}
  \ou_t=0\geq -(A\Phi+\eta)A+(A\Phi+\eta)C=\ou A\Delta\Phi+C\ou=\ou\Delta\ou+C\ou.
 \ee
As long as $\eps<\eta$, also $\ou\bdry\geq \ue\bdry$ holds and furthermore
\[
 \ou(0)\geq \uen.
\]
Therefore, by the comparison principle, we obtain $\ou\geq \ue$.

Now choose $K\subset\subset \Om$ in such a way that
\[
 A\Phi\leq \eta \qquad \mbox{in } \Om\setminus K.
\]
This entails $\ue\leq \ou=A\Phi+\eta \leq 2\eta$
in $\Om\setminus K$. Then
\bgee
  \int_0^T \int_{\Om\setminus K} |\na\ue|^2 &=& \int_0^T \int_{\Om\setminus K} \ue^{q-1} |\na\ue|^2 \ue^{1-q} \\
  &\leq& (2\eta)^{1-q} \int_0^T \int_{\Om\setminus K} \ue^{q-1} |\na\ue|^2 \\
&\leq& (2\eta)^{1-q} \int_0^T \int_{\Om} \ue^{q-1} |\na\ue|^2 \leq \frac{(2\eta)^{1-q} C(T)}{q},
\egee
by virtue of \eqref{ps1}.
\end{proof}
We are now ready to prove that the $\ue$ in fact approach a weak solution of (\ref{0}) that is locally positive in the sense
of Definition \ref{defi44}.
Before we do so, however, we prepare the following estimate for $\ue$ that will be useful in proving assertions about the blow-up behaviour of $u$.
\begin{lem}\label{thm:control_gradient_l2}
 Let  $\Om'\subset\subset\Om$ be a domain with smooth boundary. Assume also that $\phi$ denotes the solution to $-\Delta \phi=1$ in $\Om'$, $\phi|_{\dOm'}=0$.
 Then there exists $C_{\Om'}>0$ such that for each $\eps\in(\eps_j)_{j\in\N}$ and any $t>0$ the solution $\ue$ of \eqref{0eps} satisfies
\begin{align}\label{ps2}
  &\io |\na\ue(\cdot,t)|^2\leq\nn\\ &\io|\na\uen|^2\exp\Bigg[\frac1{2C_{\Om'}}\left(\sup_{\tau\in(0,t)}\io \ue(\tau)\right)\left(\iop \phi\ln\ue(\cdot,t)-\iop \phi\ln\uen+\int_0^t\iop  \ue \right) \Bigg].
\end{align}
\end{lem}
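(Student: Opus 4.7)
The strategy is to combine a standard energy-type estimate for $\io|\nabla\ue|^2$ with a novel "dual" estimate controlling the running integral $\int_0^t\rho_\eps(\io|\nabla\ue|^2)$ via the logarithmic functional $\iop\phi\ln\ue$.

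\emph{Step 1 (energy identity).} Since $\ue|_{\dOm}=\eps$ is constant in $t$, one has $u_{\eps t}|_{\dOm}=0$, and we may test (\ref{0eps}) with $u_{\eps t}/\ue$, which is well-defined because $\ue\ge\eps>0$. Integrating by parts yields
\[
  \io \frac{u_{\eps t}^2}{\ue} + \frac12\frac{d}{dt}\io|\nabla\ue|^2 = \rho_\eps\Big(\io|\nabla\ue|^2\Big)\cdot \io u_{\eps t}.
\]
Using Cauchy--Schwarz in the form $|\io u_{\eps t}|\le (\io u_{\eps t}^2/\ue)^{1/2}(\io\ue)^{1/2}$ followed by Young's inequality with weight $1$, the right-hand side is bounded above by $\io u_{\eps t}^2/\ue + \frac14\rho_\eps^2(\io|\nabla\ue|^2)\io\ue$, which upon cancellation gives
\[
  \frac{d}{dt}\io|\nabla\ue|^2 \le \tfrac12\rho_\eps^2\Big(\io|\nabla\ue|^2\Big)\io\ue.
\]
Exploiting the elementary bound $\rho_\eps^2(s)\le s\,\rho_\eps(s)$ and dividing by $y(t):=\io|\nabla\ue(\cdot,t)|^2>0$ (which stays positive since $\ue>0$ is nonconstant under our approximation), we obtain the logarithmic ODI
\[
  \frac{d}{dt}\ln y(t) \le \tfrac12\,\rho_\eps\Big(\io|\nabla\ue|^2\Big)\io\ue.
\]

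\emph{Step 2 (localization via $\phi$).} Since $\ue\ge\eps$, the function $t\mapsto \iop\phi\ln\ue(\cdot,t)$ is well-defined and differentiable, with
\[
  \frac{d}{dt}\iop\phi\ln\ue = \iop\phi\,\frac{u_{\eps t}}{\ue} = \iop\phi\Delta\ue + \rho_\eps\Big(\io|\nabla\ue|^2\Big)\iop\phi.
\]
Integrating by parts twice on $\Om'$ and using $\phi|_{\dOm'}=0$ and $-\Delta\phi=1$ yields
\[
  \iop\phi\Delta\ue = -\iop\ue - \int_{\dOm'}\ue\,\dN\phi.
\]
By Hopf's boundary-point lemma applied to $\phi$ in $\Om'$, one has $\dN\phi\le 0$ on $\dOm'$, so the boundary term is nonnegative. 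Denoting $C_{\Om'}:=\iop\phi>0$, we therefore obtain
\[
  C_{\Om'}\,\rho_\eps\Big(\io|\nabla\ue|^2\Big) \le \frac{d}{dt}\iop\phi\ln\ue + \iop\ue,
\]
and integrating from $0$ to $t$ gives the desired bound on the time-integral of $\rho_\eps(\io|\nabla\ue|^2)$:
\[
  C_{\Om'}\int_0^t \rho_\eps\Big(\io|\nabla\ue(\cdot,\tau)|^2\Big)\,d\tau \le \iop\phi\ln\ue(\cdot,t) - \iop\phi\ln\uen + \int_0^t\iop\ue.
\]

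\emph{Step 3 (combine and exponentiate).} Integrating the ODI from Step 1 and substituting the estimate from Step 2, using the simple bound $\io\ue(\tau)\le \sup_{\tau\in(0,t)}\io\ue(\tau)$, we arrive at
\[
  \ln y(t)-\ln y(0) \le \frac{1}{2C_{\Om'}}\Big(\sup_{\tau\in(0,t)}\io\ue(\tau)\Big)\Big[\iop\phi\ln\ue(\cdot,t) - \iop\phi\ln\uen + \int_0^t\iop\ue\Big].
\]
Exponentiating yields precisely (\ref{ps2}).

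\emph{Anticipated difficulties.} The energy identity and the ODE-comparison are routine; the only delicate points are (i) carrying out the double integration by parts on $\Om'$ cleanly and identifying the Hopf-sign of the boundary contribution, which is essential to drop it in favour of an \emph{upper} estimate, and (ii) the algebraic trick $\rho_\eps^2(s)\le s\rho_\eps(s)$ that turns a quadratic nonlocal source into a multiplicative one compatible with Gr\"onwall. The factor $\tfrac12$ in the exponent of (\ref{ps2}) comes precisely from the choice of weight in Young's inequality in Step 1.
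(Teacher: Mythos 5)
Your proposal is correct and follows essentially the same route as the paper: the energy identity obtained by testing with $u_{\eps t}/\ue$, Young's inequality together with the bound $\rho_\eps^2(s)\le s\,\rho_\eps(s)$ to reach $y'\le \frac12\big(\io\ue\big)\rho_\eps(y)\,y$, Gr\"onwall, and the control of $\int_0^t\rho_\eps\big(\io|\nabla\ue|^2\big)$ via $\frac{d}{dt}\iop\phi\ln\ue$ using $-\Delta\phi=1$ and the sign of $\dN\phi$ on $\dOm'$. The only cosmetic difference is that you divide by $y$ to get a logarithmic ODI (which requires a word about $y>0$, or simply noting the estimate is trivial when $\io|\nabla\uen|^2=0$), whereas the paper applies Gr\"onwall directly to $y'\le g\,y$.
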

\begin{proof}
As $\uet=0$ on $\dOm$, similarly to \eqref{26.1}, multiplying \eqref{0eps} by $\frac{\uet}{\ue}$ and integrating over $\Om$ yields
\bgee
 \io \frac{\uet^2}{u} &=& \io \uet \Delta \ue + \io \uet \rhoeps\left(\io |\na \ue|^2\right)\\
&=& -\frac12 \ddt \io|\na\ue|^2 + \io \uet \rhoeps\left(\io |\na \ue|^2\right).
\egee
After rearranging, by H\"older's and Young's inequalities and the definition of $\rhoeps$ this entails
\bgee
 \ddt \io|\na\ue|^2&\leq& -2\io\frac{\uet^2}{\ue}+ 2\left[\left(\io\left(\frac{\uet}{\sqrt{\ue}}\right)^2\right)^{\frac12}\left(\io\sqrt{\ue}^2\right)^{\frac12}\right] \rhoeps\left(\io|\na\ue|^2\right)\\
&\leq&-2\io\frac{\uet^2}{\ue}+2\io\frac{\uet^2}{\ue} + \frac12 \io\ue \rhoeps\left(\io|\na\ue|^2\right)^2\\
&\leq& \frac12 \io \ue \rhoeps\left(\io|\na\ue|^2\right)\io|\na \ue|^2\qquad \mbox{on } (0,\infty).
\egee

This looks like a quadratic differential inequality for $y(t)=\io|\na\ue|^2$ and at first does not seem helpful for obtaining an estimate for this quantity. Therefore we shall split the respective quadratic term and apply Gronwall's lemma to
\( y'(t)\leq g(t)y(t)\),
where
\[
 g(t)=\frac12 \io \ue(t) \rhoeps\left(\io|\na\ue(t)|^2\right)\!,
\]
 which leads to
\be{aftergronwall}
 y(t)\leq y(0)\exp\int_0^t g(\tau) d\tau\qquad \mbox{ for all } t>0.
\ee
In this situation, however, we are left with a term $\int_0^t \rhoeps\left(\io|\na\ue|^2\right)$ in the exponent and we prepare an estimate for this in the following way:
With $\phi$ as specified in the hypothesis, we let $C_{\Om'}=\int_{\Om'}\phi>0$.
Multiplication of \eqref{0eps} by $\frac{\phi}{\ue}$ and integrating over $\Om'$ then gives
\bgee
 \ddt\int_{\Om'} \ln\ue\phi&=&\int_{\Om'}\Delta  \ue\phi + \int_{\Om'}\phi\rhoeps\left(\io |\na \ue|^2\right)\\
 &=&\int_{\Om'}\ue\Delta\phi-\int_{\dOm'}\dN\ue\phi+\int_{\dOm'} \ue\dN \phi + C_{\Om'}\rhoeps\left(\io |\na \ue|^2\right) \mbox{ on }(0,\infty).
\egee
Taking into account the definition of $\phi$ and its consequence $\dN\phi|_{\del \Om'} \leq 0=\phi|_{\dOm'}$, we infer that
\bas
\ddt\int_{\Om'}\phi\ln\ue\geq-\int_{\Om'}\ue+C_{\Om'}\rhoeps\left(\io |\na \ue|^2\right) \quad \mbox{ on } (0,\infty).
\eas
Therefore
\bas
 \int_0^t \rhoeps\left(\io |\na \ue|^2\right) \leq \frac1{C_{\Om'}}\left[\int_0^t\int_{\Om'}\ue+\int_{\Om'}\phi\ln\ue(t)-\int_{\Om'} \phi\ln\uen \right]
\eas
for any $t>0$, and we can conclude from \eqref{aftergronwall} that
\[
 \io|\na\ue(t)|^2 \leq \io|\na \uen|^2 \exp\left[\frac1{2C_{\Om'}} \sup_{\tau\in(0,t)} \io u(\tau) \left(\int_0^t\iop \ue+\iop \phi\ln\ue(t)-\iop  \phi\ln\uen \right) \right]
\]
for all $t>0$.
\end{proof}

Another useful piece of information is that a condition like (H3) remains satisfied for any $t>0.$

\begin{lem}\label{lem:normPhiinftyueps}
 Let $T>0$, $M>0$ and $\eps\in(\eps_j)_{j\in\N}$ be such that $\norm[\Phi,\infty]{\uen-\eps}<\infty$. Then any solution $\ue$ of \eqref{0eps} which satisfies
\[
 \io |\na\ue(t)|^2\leq M\qquad\mbox{for any}\quad t\in[0,T]
\]
already fulfils
\[
 \norm[\Phi,\infty]{\ue-\eps}\leq \max\set{M,\norm[\Phi,\infty]{\uen-\eps}}.
\]
\end{lem}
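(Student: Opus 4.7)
The strategy is a pointwise comparison argument with a stationary-in-time supersolution built from $\Phi$. Set $K:=\max\set{M,\norm[\Phi,\infty]{\uen-\eps}}$ and take as candidate supersolution
\[
 \ou(x,t):=\eps+K\Phi(x),\qquad x\in\Ombar,\; t\in[0,T].
\]
By the definition of $\norm[\Phi,\infty]{\cdot}$ and \eqref{eq:defPhi}, we have $\ou(\cdot,0)\ge \eps+\norm[\Phi,\infty]{\uen-\eps}\Phi\ge \uen$ in $\Om$ and $\ou=\eps=\ue$ on $\dOm$, so the parabolic boundary data are correctly ordered.

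The crucial observation that unlocks a standard comparison is that the nonlocal functional appearing in \eqref{0eps} depends only on $\ue$; it is therefore a \emph{given} function of time once we test against $\ue$ itself. Hence set
\[
 f(t):=\rhoeps\Big(\io|\na\ue(\cdot,t)|^2\Big),\qquad t\in[0,T],
\]
which by the hypothesis $\io|\na\ue(\cdot,t)|^2\le M$ together with $\rhoeps(z)\le z$ satisfies $0\le f(t)\le M\le K$ on $[0,T]$. Both $\ue$ and $\ou$ can then be compared inside the \emph{local} quasilinear equation $v_t=v\Delta v + vf(t)$, with $\ue$ being a classical solution and
\[
 \ou_t-\ou\Delta\ou-f(t)\ou \;=\; 0-(\eps+K\Phi)(-K)-f(t)(\eps+K\Phi) \;=\; (\eps+K\Phi)\bigl(K-f(t)\bigr)\;\ge\;0
\]
thanks to $-\Delta\Phi=1$ and $K\ge f(t)$. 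Thus $\ou$ is a classical supersolution.

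The comparison principle for the (strictly) parabolic operator $\partial_t - a(x,t)\Delta - c(x,t)$, applied with coefficient $a=\ue\ge\eps>0$ and $c=f(t)$ in the equation for the difference $\ou-\ue$ (which is nonnegative on the parabolic boundary), then yields $\ou\ge \ue$ on $\Ombar\times[0,T]$. Rewriting this as $\ue(x,t)-\eps\le K\Phi(x)$ and taking the essential supremum of $|(\ue-\eps)/\Phi|$ gives
\[
 \norm[\Phi,\infty]{\ue(\cdot,t)-\eps}\;\le\; K\;=\;\max\set{M,\,\norm[\Phi,\infty]{\uen-\eps}}\qquad\text{for all } t\in[0,T],
\]
as claimed. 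The only point requiring a bit of care is the comparison itself, because of the product structure $\ue\Delta\ue$ appearing in a quasilinear fashion; but since $\ue$ is smooth and uniformly bounded below by $\eps$ in $\Om$ and $\ou$ is smooth, the standard linearisation argument (writing the equation for $\ou-\ue$ and applying the maximum principle to its negative part) goes through without trouble. Apart from this minor technicality there is no essential obstacle.
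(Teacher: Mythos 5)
Your proof is correct and follows essentially the same route as the paper: both compare $\ue$ with the stationary supersolution $\eps+K\Phi$, $K=\max\set{M,\norm[\Phi,\infty]{\uen-\eps}}$, the only cosmetic difference being that the paper bounds the nonlocal term by the constant $M$ (so that $\ue$ becomes a subsolution of $v_t=v\Delta v+Mv$) whereas you keep the exact coefficient $f(t)=\rhoeps\big(\io|\na\ue|^2\big)\le M\le K$. The conclusion and the verification of the parabolic boundary data coincide with the paper's argument.
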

\begin{proof}
 Let $C=\max\smallset{M,\norm[\Phi,\infty]{\uen-\eps}}$ and consider $\ou:= C\Phi+\eps$ with $\Phi$ as in \eqref{eq:defPhi}.
 Then $\ou_t=0\geq (M-C) (C\Phi+\eps) = \ou\Lap \ou+M\ou$,
 whereas $u_{\eps t}=\ue\Lap \ue+\ue \rhoeps\left(\io|\na\ue|^2\right)\leq \ue\Lap \ue +M\ue$.
 Additionally $\ou\bdry=\eps=\ue\bdry$ and $\ou(x,0)-\eps=C\Phi(x)\geq \Phi(x)\norm[\Phi,\infty]{\uen-\eps}\geq \uen(x)-\eps$ and
therefore the comparison principle \cite{Wie} asserts that $\ue\leq \ou$ and hence implies the claim.
\end{proof}
With this information at hand, we can proceed to the proof of convergence of the $\ue$ to a solution of \eqref{0} that still satisfies an inequality like
\eqref{ps2}.
\begin{lem}\label{theo28}
  Suppose that $u_0$ satisfies (H1)-(H3). Then there exists $T>0$ depending on bounds on $\norm[\Liom]{u_0}$ and $\norm[L^2(\Om)]{\na u_0}$ and a locally positive weak solution $u$ of (\ref{0})
  in $\Omega \times (0,T)$. This solution can be obtained as the a.e.~pointwise limit of a subsequence of
  the solutions $\ue$ of (\ref{0eps}) as $\eps=\eps_j \searrow 0$, and for any smoothly bounded subdomain $\Om'\subsubset\Om$ there is $C_{\Om'}>0$ such that
 \begin{align}\label{theestimate}
  &\io |\na u(\cdot,t)|^2\nn\\&\leq \io|\na u_0|^2\exp\Bigg[\frac{1}{2C_{\Om'}}\left(\sup_{\tau\in(0,t)}\io u(\tau)\right)\left(\iop \phi\ln u(\cdot,t)-\iop \phi\ln u_0+\int_0^t\iop  u \right) \Bigg]
 \end{align}
 as well as
 \be{controlsolnPhi}
  \norm[\Phi,\infty]{u(t)}\leq \max \set{\norm[\Phi,\infty]{u_0}, \esssup_{\tau\in(0,t)} \io|\na u(\tau)|^2}
 \ee
 for a.e. $t\in(0,T).$
\end{lem}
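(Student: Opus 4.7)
The plan is to obtain $u$ as an a.e.\ pointwise limit of the regularized solutions $\ue$ on a short interval $[0,T]$, and to pass to the limit both in the weak formulation \eqref{0vw} and in the estimates of Lemmas \ref{thm:control_gradient_l2} and \ref{lem:normPhiinftyueps}. The starting point is to fix $M>0$ depending on $u_0$ such that, by \eqref{a5} and \eqref{ae}, $\uen\le M$ and $\io|\na \uen|^2\le M$ for all sufficiently small $\eps\in(\eps_j)_{j\in\N}$. Lemma \ref{lem26}(i) then yields $T_1(M),C_1(M)>0$ with $\ue\le C_1(M)$ on $\Om\times[0,T_1(M))$, and Lemma \ref{lem26}(ii), applied with $\max\{C_1(M),M\}$ in place of $M$ and with $T_1(M)$ in place of $T$, delivers $T\in(0,T_1(M)]$ and $C_2>0$ such that
\[
 \int_0^{T}\io \frac{\uet^2}{\ue}+\sup_{t\in(0,T)}\io|\na\ue(\cdot,t)|^2\le C_2.
\]
Since $\ue\le C_1(M)$, the bound on $\int\uet^2/\ue$ implies $\uet$ is bounded in $L^2(\Om\times(0,T))$, and along a subsequence $\eps=\eps_j\searrow 0$ the Aubin--Lions lemma gives $\ue\to u$ a.e.\ and strongly in $L^2(\Om\times(0,T))$, while $\na\ue\rightharpoonup\na u$ and $\uet\rightharpoonup u_t$ weakly in $L^2$.

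To establish local positivity and to upgrade the gradient convergence, I would next show that on every compact $K\subsubset\Om$ one has $\ue\ge \tilde c(K,T)>0$ on $K\times[0,T]$ uniformly in (small) $\eps$. Indeed, \eqref{a3} gives $\uen\ge c_K>0$ on some $K\subsubset K'\subsubset\Om$, and a comparison with the separated-variables subsolution $\underline u(x,t):=c\,\psi(x)e^{-\lambda t}$, where $\psi$ solves $-\Delta\psi=1$ on $K'$ with $\psi|_{\partial K'}=0$ and $c,\lambda$ are chosen appropriately, yields the claim. Equation \eqref{0eps} is thereby uniformly parabolic on compact subsets of $\Om\times(0,T)$, and standard Schauder estimates deliver $C^{1+\alpha,(1+\alpha)/2}$-bounds for $\ue$ locally; extracting a further subsequence, $\na\ue\to\na u$ locally uniformly in $\Om\times(0,T)$, and in particular $\frac{1}{u}\in L^\infty_{loc}(\Om\times[0,T])$.

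The main obstacle is passing to the limit in the nonlocal term $\int_0^T(\io\ue\varphi)\rhoeps(\io|\na\ue|^2)$, since weak $L^2$-convergence of $\na\ue$ alone does not suffice. Here Lemma \ref{uepsbdry} is decisive: given $\delta>0$, there is a compact $K\subsubset\Om$ with $\int_0^T\int_{\Om\setminus K}|\na\ue|^2<\delta$ uniformly in small $\eps$, which, combined with the locally uniform convergence $\na\ue\to\na u$ on $K$, implies
\[
 \int_0^T\io|\na\ue|^2\to\int_0^T\io|\na u|^2.
\]
Since $\io|\na\ue|^2\le C_2<1/\eps$ for small $\eps$, one has $\rhoeps(\io|\na\ue|^2)=\io|\na\ue|^2$, so that passing to the limit in the weak form of \eqref{0eps} produces \eqref{0vw}. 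Finally, \eqref{theestimate} follows from \eqref{ps2} by passing to the limit at a.e.\ $t\in(0,T)$: weak-$L^2$ lower semicontinuity handles the left-hand side, while a.e.\ pointwise convergence combined with local positivity and dominated convergence handles the logarithmic and integral terms on the right-hand side. Estimate \eqref{controlsolnPhi} is obtained analogously by passage to the limit in Lemma \ref{lem:normPhiinftyueps}.
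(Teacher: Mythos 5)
Your proposal is correct and follows the paper's overall strategy (uniform bounds from Lemma \ref{lem26}, local positivity by comparison with a separated subsolution, Lemma \ref{uepsbdry} to rule out boundary concentration of $\na\ue$, and limit passage in Lemma \ref{thm:control_gradient_l2} and Lemma \ref{lem:normPhiinftyueps}), but it diverges at the single most delicate step, namely the strong convergence of the gradients. You obtain $\na\ue\to\na u$ locally uniformly via interior parabolic regularity: since $c_4\le\ue\le c_1$ on compact subsets, the equation is uniformly parabolic there and higher interior estimates apply. This works, but be aware that it is not literally ``standard Schauder'': the non-divergence coefficient is $\ue$ itself, so you first need a uniform interior H\"older bound for $\ue$ (Krylov--Safonov), and since the inhomogeneity $\ue\,\rhoeps\big(\io|\na\ue|^2\big)$ is only known to be bounded (not H\"older in $t$ uniformly in $\eps$), you should pass through interior $W^{2,1}_p$ estimates and Sobolev embedding rather than classical Schauder theory; moreover these estimates degenerate as $t\searrow0$, which is harmless here only because $\sup_t\io|\na\ue(\cdot,t)|^2\le C_2$ lets you absorb a thin initial time layer. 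The paper instead proves $\na\ue\to\na u$ in $L^2_{loc}(\Om\times[0,T])$ by a purely integral argument: testing $|\na\ue-\na u|^2$ against a cutoff $\psi$, integrating by parts, substituting $\Delta\ue=\frac{\uet}{\ue}-\rhoeps(\io|\na\ue|^2)$, and controlling the resulting terms by the bound on $\int_0^T\io\frac{\uet^2}{\ue}$ together with the local lower bound for $\ue$. That route is more elementary and self-contained (no regularity theory beyond what Lemma \ref{lem26} already provides), while yours buys stronger (locally uniform) convergence at the price of heavier machinery. Either version, combined with Lemma \ref{uepsbdry} exactly as you describe, yields $\io|\na\ue|^2\to\io|\na u|^2$ in $L^1((0,T))$ and hence the limit passage in the nonlocal term; your concluding remarks on \eqref{theestimate} and \eqref{controlsolnPhi} match the paper's treatment.
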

\begin{proof}
  We set $M_1:=\max\{\|u_0\|_{L^\infty(\Omega)}+1, \io |\nabla u_0|^2 + 1 \Big\}$ and let $T_1=T_1(M_1)$ and $c_1=C_1(M_1)$ be as in Lemma
  \ref{lem26} i). Then this lemma states that $\ue \le c_1$ in $\Omega \times (0,T_1)$ for all $\eps \in (\eps_j)_{j\in\N}$.
Accordingly, corresponding to $M_2=\max\{ c_1,\io |\nabla u_0|^2+1\}$, Lemma \ref{lem26} ii) provides $T=T_2(M_2)\in(0,T_1)$ and $c_2=C_2(M_2)>0$ such that
  \be{28.4}
	\int_0^{T} \io \frac{u_{\eps t}^2}{\ue} + \sup_{t\in (0,T)} \io |\nabla \ue (\cdot,t)|^2 \le c_2
  \ee
  for all $\eps\in (\eps_j)_{j\in\N}$, which by $\ue\le c_1$ can be turned into a uniform bound on $\norm[L^2(\Om\times(0,T)]{\uet}$, from which it follows by means of the fundamental theorem of calculus that after possibly enlarging $c_2$, we also have
  \be{28.44}
	\|\ue\|_{C^\frac{1}{2}([0,T];L^2(\Omega))} \le c_2
  \ee
  for such $\eps$.\\
  In order to prove a uniform estimate for $\ue$ from below, locally in space, we follow a standard comparison procedure: we pick any smoothly bounded domain
  $\Omega' \subsubset \Omega$ and let $\phi \in C^2(\bar\Omega')$ solve $-\Delta \phi=1$ in $\Omega'$
  with $\phi|_{\pO'}=0$. Then the lower estimates in (\ref{a3}) guarantee that
  \be{28.5}
	u_{0\eps}(x) \ge c_3(\Omega') \phi(x) \qquad \mbox{for all } x\in\Omega'
  \ee
  holds with some appropriately small $c_3(\Omega')>0$. Letting $y(t):=\frac{c_3(\Omega')}{1+c_3(\Omega') t}, t\ge 0$,
  denote the solution of
  $y'=-y^2$ with $y(0)=c_3(\Omega')$, we thus find that $\uu(x,t):=y(t) \phi(x)$ satisfies $\uu \le \ue$ on the parabolic boundary
  of $\Omega' \times (0,\infty)$. Since
  \bas
	\uu_t - \uu \Delta \uu = y' \phi + y^2 \phi = 0 \qquad \mbox{in } \Omega' \times (0,\infty)
  \eas
  and
  \bas
	u_{\eps t} - \ue \Delta \ue = \ue \cdot \rhoeps \Big( \io |\nabla \ue|^2 \Big) \ \ge 0
	\qquad \mbox{in } \Omega \times (0,\infty),
  \eas
  we conclude from the comparison principle (see \cite{Wie} for an adequate version) that $\uu \le \ue$ and thus,
  in particular, that for each compact $K\subset \Omega$ and $T'>0$ there exists a suitably small $c_4(K,T')>0$ such that
  \be{28.6}
	\ue \ge c_4(K,T') \qquad \mbox{in } K \times (0,T')
  \ee
  holds for all $\eps\in(\eps_j)_{j\in\N}$. 
  Now the estimate $\ue\leq c_1$, (\ref{28.4}), (\ref{28.44}) and (\ref{28.6}) along with standard compactness arguments allow us to extract
  a subsequence $(\eps_{j_k})_{k\in\N}$ of $(\eps_j)_{j\in\N}$ and a function $u: \ \Omega \times [0,T] \to \R$ 
such that
  \begin{eqnarray}
	\ue \to u & & \mbox{in } \ C^0([0,T);L^2(\Omega)) \quad \mbox{and a.e.~in } \Omega \times (0,T), \label{28.7} \\
	\nabla \ue \wto \nabla u & & \mbox{in } \ L^2_{loc}(\bar\Omega \times [0,T)) \qquad \mbox{and} \label{28.8} \\
	u_{\eps t} \wto u_t & & \mbox{in } \ L^2(\Omega \times (0,T)) \label{28.9}
  \end{eqnarray}
  as $\eps=\eps_{j_k} \searrow 0$. From (\ref{28.7}), the inequality $\ue\leq C_1$ and (\ref{28.6}), we know that
  $u\le c_1$ a.e.~in $\Omega \times (0,T)$ and $u \ge c_4(K,T)$ a.e.~in $K \times (0,T)$ whenever $K \subsubset \Omega$. Moreover, since $\ue-\eps$ vanishes on $\pO$, (\ref{28.8}) implies that
  $u\in L^2 ((0,T);W_0^{1,2}(\Omega))$, so that $u$ fulfills all regularity and positivity properties required for a
  locally positive weak solution in $\Omega \times (0,T)$ in the sense of Definition \ref{defi44}.\\
  In order to verify that $u$ is a weak solution of (\ref{0}) it thus remains to check (\ref{0w}). To prepare this,
  we claim that in addition to (\ref{28.8}), we also have the strong convergence properties
  \be{28.99}
	\nabla \ue \to \nabla u \qquad \mbox{in } L^2_{loc}(\Omega \times [0,T])
	\qquad \mbox{and a.e.~in } \Omega \times (0,T)
  \ee
  as well as
  \be{28.999}
	\io |\nabla \ue(x,\cdot)|^2 dx \to \io |\nabla u(x,\cdot)|^2 dx \qquad \mbox{in } L^1((0,T))
  \ee
  as $\eps=\eps_{j_k}\searrow 0$. 
  To see (\ref{28.99}), we let $K \subsubset \Omega$ 
  be given and fix a nonnegative $\psi \in C_0^\infty(\Omega)$ such
  that $\psi\equiv 1$ in $K$. Then
  \bea{28.100}
	\int_0^{T} \int_K |\nabla \ue - \nabla u|^2
	&\le& \int_0^{T} \io |\nabla \ue-\nabla u|^2 \psi \nn\\
	&=& \int_0^{T} \io \nabla (\ue-u) \cdot \nabla \ue \cdot \psi
		+ \int_0^{T} \io \nabla u \cdot \nabla (\ue-u) \cdot \psi \nn\\
	&=:& I_1(\eps)+I_2(\eps)\qquad \mbox{ for all }\eps\in(\eps_j)_{j\in\N},
  \eea
  where $I_2(\eps) \to 0$ as $\eps=\eps_{j_k} \searrow 0$ by (\ref{28.8}). Using the equation for $\ue$, however, after an
  integration by parts we find that
  \bas
	I_1(\eps) &=& - \int_0^{T} \io (\ue-u)\Delta \ue \cdot \psi - \int_0^{T} \io (\ue-u) \nabla \ue \cdot \nabla \psi \nn\\
	&=& - \int_0^{T} \io (\ue-u) \cdot \frac{u_{\eps t}}{\ue} \cdot \psi
		+ \int_0^{T} \io (\ue-u) \cdot \rhoeps \Big( \io |\nabla \ue|^2 \Big) \cdot \psi \nn\\
	& & - \int_0^{T} \io (\ue-u) \nabla \ue \cdot \nabla \psi \nn\\
	&=:& I_{11}(\eps)+I_{12}(\eps)+I_{13}(\eps)\qquad \mbox{ for all } \eps\in(\eps_j)_{j\in\N}.
  \eas
  Due to (\ref{28.7}) and (\ref{28.8}), we have $I_{13}(\eps) \to 0$, and (\ref{28.7}) together with (\ref{28.4}) and
  H\"older's inequality imply that
  \bas
	|I_{12}(\eps)| &\le& \Big( \int_0^{T} \io (\ue-u)^2 \Big)^\frac{1}{2} \cdot
	\Big[ \int_0^{T}\! \Big(\io |\nabla \ue|^2 \Big)^2\Big]^\frac{1}{2} \cdot \|\psi\|_{L^2(\Omega)} \to 0
  \eas
  as $\eps=\eps_{j_k}\searrow 0$, where we again have used the fact that $\rhoeps(z)\le z$ for any $z\ge 0$ and all
  $\eps \in (\eps_j)_{j\in\N}$.
  We now use H\"older's inequality and the local lower estimate (\ref{28.6}), which in conjunction with (\ref{28.4}) yields
  \bas
	|I_{11}(\eps)| &\le& \Big( \int_0^{T} \io \frac{u_{\eps t}^2}{\ue} \Big)^\frac{1}{2} \cdot
	\Big( \int_0^{T'} \io \frac{(\ue-u)^2}{\ue} \cdot \psi^2 \Big)^\frac{1}{2} \nn\\
	&\le& c_2^\frac{1}{2} \cdot \frac{\|\psi\|_{L^\infty(\Omega)}}{(c_4(\supp \psi, T))^\frac{1}{2}} \cdot
	\Big( \int_0^{T} \io (\ue-u)^2 \Big)^\frac{1}{2} \to 0
  \eas
  as $\eps=\eps_{j_k}\searrow 0$, by \eqref{28.7}. Altogether, we obtain that $I_1(\eps) \to 0$ and hence, by (\ref{28.100}), that
  $\nabla \ue \to \nabla u$ in $L^2(K \times (0,T))$ as $\eps=\eps_{j_k}\searrow 0$ for arbitrary $K\subsubset \Omega$.\\
Having thus proved (\ref{28.99}), with the aid of Lemma \ref{uepsbdry} we obtain (\ref{28.999}) as a straightforward consequence:

Given $\delta>0$, we let $K=K(c_1,c_2,T,\frac{\delta}{4})$ and $\eta>0$ be the set and the constant provided by Lemma \ref{uepsbdry}, and employ the convergence asserted by \eqref{28.8} to choose $k_0\in\N$ such that for all $k,l>k_0$ we have $\int_0^{T}\int_K ||\na u_{\eps_k}|^2-|\na u_{\eps_l}|^2|\leq \frac{\delta}{2}$.
Then for all $k,l>k_0$,
\bas
 \int_0^{T}\left\lvert\io|\na u_{\eps_k}|^2-\io|\na u_{\eps_l}|^2\right\rvert 
 &\leq& \int_0^{T}\int_K \left\lvert|\na u_{\eps_k}|^2-|\na u_{\eps_l}|^2\right\rvert + \int_0^{T}\int_{\Om\setminus K} |\na u_{\eps_k}|^2\\
 && + \int_0^{T} \int_{\Om\setminus K} |\na u_{\eps_l}|^2 \\
&\leq& \frac\delta2+\frac\delta4+\frac\delta4
\eas
and thanks to the completeness of $L^2((0,T))$ 
we obtain \eqref{28.999}.
  We can now proceed to verify that (\ref{0w}) holds for all $\varphi \in C_0^\infty(\Omega \times (0,T))$. To this end,
  we multiply (\ref{0eps}) by $\varphi\in C_0^\infty(\Om\times(0,T))$ and integrate to obtain
  \bas
	\int_0^T \io u_{\eps t} \varphi + \int_0^T \io |\nabla \ue|^2 \varphi + \int_0^T \io \ue \nabla\ue \cdot \nabla\varphi
	= \int_0^T \io \ue \cdot \rhoeps \Big( \io |\nabla\ue|^2 \Big) \cdot \varphi.
  \eas
  Here, as $\eps=\eps_{j_k}\searrow 0$ we have
  \bas
	\int_0^T \io u_{\eps t} \varphi \to \int_0^T \io u_t \varphi
  \eas
  by (\ref{28.9}), whereas (\ref{28.99}) and (\ref{28.7}) allow us to conclude that
  \bas
	\int_0^T \io |\nabla\ue|^2 \varphi \to \int_0^T \io |\nabla u|^2 \varphi
  \eas
  and
  \bas
	\int_0^T \io \ue\nabla\ue \cdot \nabla\varphi \to \int_0^T \io u\nabla u \cdot \nabla\varphi,
  \eas
  because $\varphi$ vanishes near $\pO$ and near $t=T$. Finally,
  \bas
	\int_0^T \io \ue \cdot \rhoeps \Big(\io |\nabla\ue|^2 \Big) \cdot \varphi
	\to \int_0^T \io u \Big( \io |\nabla u|^2 \Big) \cdot \varphi
  \eas
  because of (\ref{28.7}), (\ref{28.999}) and the fact that $\rhoeps(z)\to z$ for all $z \ge 0$ as $\eps\searrow 0$.
  We thereby see that (\ref{0w}) holds and thus infer that $u$ in fact is a weak solution of (\ref{0}) in
  $\Omega \times (0,T)$.
The inequality \eqref{theestimate} results from Lemma \ref{thm:control_gradient_l2} and the convergence statements.
The estimate \eqref{controlsolnPhi} results from Lemma \ref{lem:normPhiinftyueps}:
By \eqref{28.4} and \eqref{ae} we have the necessary bounds on gradient and initial value, independent of $\eps\in(\eps_j)_{j\in\N}$. Furthermore, for any $t\in[0,T]$ we can find a subsequence $(\eps_{j_k})_{k\in\N}$ of $(\eps_j)_{j\in\N}$ such that
\[
 \frac{u_{\eps_{j_k}}(t)-\eps}{\Phi} \wto^* \frac {u(t)} \Phi \qquad\mbox{in } L^\infty(\Om)
\]
and finally the same bound as in Lemma \ref{lem:normPhiinftyueps} holds for $u(t)$ because
\begin{align*}
 \norm[\Phi,\infty]{u(t)} =& \norm[\infty]{\frac{u(t)}\Phi} \leq \liminf_{\eps=\eps_{j_k}\searrow0} \norm[\infty]{\frac{u_\eps (t)}\Phi} \\
\leq& \liminf_{\eps=\eps_{j_k}\searrow0} \max\set{\sup_{0<\tau<t} \io|\na \ue(\tau)|^2,\norm[\Phi,\infty]{\uen-\eps}}\\
\leq& \liminf_{\eps=\eps_{j_k}\searrow0} \max\set{\sup_{0<\tau<t} \io|\na \ue(\tau)|^2,\norm[\Phi,\infty]{u_0}+\eps}\\
\leq& \max\set{\esssup_{0<\tau<t} \io |\na u(\tau)|^2,\norm[\Phi,\infty]{u_0}},
\end{align*}
where for the last inequality we relied on the pointwise a.e. convergence of $\io |\na\ue|^2$ in $(0,T)$, due to \eqref{28.999} valid along a subsequence.
\end{proof}
We are now in the position to prove Theorem \ref{thm:Tmax}, which asserts the existence of a locally positive weak solution and $\Tmax\in(0,\infty]$ such that the solution blows up at $\Tmax$ or exists globally.
\begin{proof}[Proof of Theorem \ref{thm:Tmax}]
By Lemma \ref{theo28} there exists $T>0$ such that \eqref{0} possesses a locally positive weak solution $u$ on $\Om\times(0,T)$, which satisfies \eqref{leq} and \eqref{leqnormphi} for a.e. $t\in(0,T)$.
Hence, the set
\begin{align*}
 S:=\Bigg\{\Ttilde>0\bigg| \mbox{there exists a locally}&\mbox{ positive solution } u\; \mbox{to \eqref{0} on } \Om\times(0,\Ttilde)\;\\
&\mbox{satisfying \eqref{leq} and \eqref{leqnormphi} for a.e. }t\in(0,\Ttilde)\Bigg\}
\end{align*}
is not empty and
\[
 \Tmax=\sup S \in(0,\infty]
\]
is well-defined.
Assume that $\Tmax<\infty$ and $\limsup_{t\nearrow \Tmax} \norm[\Liom]{u(\cdot,t)}<\infty$.

This implies the existence of a constant $M>0$ such that $u\leq M$ and hence, due to \eqref{leq}, also that there is $C>0$ with $\io |\na u|^2\leq C$.
Lemma \ref{theo28} provides $T>0$ such that for any initial data $u_0$ satisfying $u_0\leq M$, $\io|\na u_0|^2\leq C$, a locally positive weak solution existing on $\Om\times (0,T)$ can be constructed.

Choose $t_0\in (\Tmax-\frac T2, \Tmax)$ such that $u(x,t_0)\leq M$ and $\io|\na u(x,t_0)|^2\leq C$
and such that $u$ satisfies \eqref{leq} and \eqref{leqnormphi} at $t=t_0$.

Let $v$ denote the corresponding solution with initial value $u(\cdot,t_0)$
and define
\[
 \uhat(x,t)=\begin{cases}
       u(x,t),& x\in\Om, t<t_0\\
       v(x,t-t_0),& x\in\Om, t\in (t_0,t_0+T).
      \end{cases}
\]
Then $\uhat$ is a solution of \eqref{0}, and \eqref{leq} and \eqref{leqnormphi} obviously hold for a.e. $t\in(0,t_0)$, whereas for $t\in(t_0,t_0+T)$ we have
\begin{align*}
&\io|\na\uhat(\cdot,t)|^2 \\ \leq&  \io|\na u(t_0)|^2\times\\&\times \exp\Bigg[\frac{1}{2C_{\Om'}}\left(\sup_{\tau\in(t_0,t_0+T)}\io \uhat(\cdot,\tau)\right)\left(\iop \phi\ln \uhat(\cdot,t)-\iop \phi\ln u(\cdot,t_0)+\int_{t_0}^{t_0+T} \iop  \uhat \right) \Bigg]\\
\leq&  \io|\na u_0|^2\exp\Bigg[\frac{1}{2C_{\Om'}}\left(\sup_{\tau\in(0,t_0)}\io u(\cdot,\tau)\right)\left(\iop \phi\ln u(\cdot,t_0)-\iop \phi\ln u_0+\int_0^{\cdot,t_0}\iop  u \right) \Bigg]\times\\
&\qquad \times\exp\Bigg[\frac{1}{2C_{\Om'}}\left(\sup_{\tau\in(t_0,t)}\io \uhat(\cdot,\tau)\right)\left(\iop \phi\ln \uhat(\cdot,t)-\iop \phi\ln u(\cdot,t_0)+\int_{t_0}^t\iop  \uhat \right) \Bigg]\\
\leq&  \io|\na u_0|^2\exp\Bigg[\frac{1}{2C_{\Om'}}\left(\sup_{\tau\in(0,t)}\io u(\cdot,\tau)\right)\cdot\\&\;\cdot\Big(\iop \phi\ln u(\cdot,t_0)-\iop \phi\ln u_0+\int_0^{t_0}\iop  u + \iop \phi\ln \uhat(\cdot,t)-\iop \phi\ln u(\cdot,t_0)+\int_{t_0}^t\iop  \uhat \Big) \Bigg]\\
=&  \io|\na u_0|^2\exp\Bigg[\frac{1}{2C_{\Om'}}\left(\sup_{\tau\in(0,t)}\io u(\cdot,\tau)\right)\left(\iop \phi\ln \uhat(\cdot,t) -\iop \phi\ln u_0+\int_0^{t}\iop  u  \right) \Bigg].
\end{align*}
Also, for a.e. $t\in(0,t_0+T)$,
\bas
 \norm[\Phi,\infty]{\uhat(\cdot,t)}\leq& \max\set{\norm[\Phi,\infty]{u(\cdot,t_0)},\sup_{\tau\in(t_0,t)} \io |\na \uhat(\cdot,\tau)|^2} \\
\leq&\max\set{\max\set{\norm[\Phi,\infty]{u_0}, \sup_{\tau\in(0,t_0)} \io|\na u(\cdot,\tau)|^2},\sup_{\tau\in(t_0,t)} \io |\na \uhat(\cdot,\tau)|^2} \\
\leq&\max\set{\norm[\Phi,\infty]{u_0}, \sup_{\tau\in(0,t)}\io |\na \uhat(\cdot,\tau)|^2}.
\eas
Thus $\uhat$ is defined on $(0,\Tmax+\frac{T}2)$, contradicting the definition of $\Tmax$.
\end{proof}

As a direct consequence of \eqref{leqnormphi} we obtain that finite-time gradient blow-up cannot occur.
More precisely, we have the following.
\begin{cor}\label{ngb}
 Let $u$ and $\Tmax$ be as given by Theorem \ref{thm:Tmax}.\\
 If $\limsup_{t\nearrow \Tmax} \norm[\Liom]{u(\cdot,t)}=\infty,$ then also
\[
 \limsup_{t\nearrow \Tmax} \io |\na u(x,t)|^2 dx= \infty.
\]
\end{cor}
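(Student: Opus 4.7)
The plan is to argue by contraposition: assume $\limsup_{t\nearrow \Tmax} \io |\na u(x,t)|^2\,dx<\infty$ and deduce that $\|u(\cdot,t)\|_{L^\infty(\Omega)}$ remains bounded on $[0,\Tmax)$. Then the contrapositive yields the stated implication.

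The key ingredient is the bound \eqref{leqnormphi} already proved in Theorem~\ref{thm:Tmax}: for almost every $t\in(0,\Tmax)$,
\[
  \norm[\Phi,\infty]{u(\cdot,t)}\leq \max\set{\norm[\Phi,\infty]{u_0},\sup_{\tau\in(0,t)}\io|\na u(x,\tau)|^2\,dx}.
\]
Under the standing assumption $\limsup_{t\nearrow \Tmax} \io |\na u|^2<\infty$, the right-hand side is dominated by some finite constant $M>0$ (independent of $t$), using also that (H3) guarantees $\norm[\Phi,\infty]{u_0}<\infty$. Hence $\norm[\Phi,\infty]{u(\cdot,t)}\leq M$ for a.e.\ $t\in(0,\Tmax)$.

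Now I would convert this weighted $L^\infty$-bound into an unweighted one via the elementary pointwise estimate
\[
  |u(x,t)|=\left|\frac{u(x,t)}{\Phi(x)}\right|\cdot \Phi(x)\leq \norm[\Phi,\infty]{u(\cdot,t)}\cdot \norm[L^\infty(\Omega)]{\Phi}
\]
for a.e.\ $x\in\Omega$. Since $\Phi\in C^2(\Ombar)$, $\norm[L^\infty(\Omega)]{\Phi}$ is finite, and we conclude $\norm[L^\infty(\Omega)]{u(\cdot,t)}\leq M\cdot \norm[L^\infty(\Omega)]{\Phi}$ for a.e.\ $t\in(0,\Tmax)$. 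This contradicts the assumption $\limsup_{t\nearrow \Tmax}\norm[\Liom]{u(\cdot,t)}=\infty$, completing the argument.

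The proof is essentially a bookkeeping exercise, as all hard work has been deposited in the estimate \eqref{leqnormphi}; the only mild point to be careful about is that \eqref{leqnormphi} holds only almost everywhere in time, so the $\limsup$ in the hypothesis must be interpreted in the essential sense as well. This poses no difficulty since $\Tmax$ was defined precisely via the class of weak solutions satisfying \eqref{leqnormphi} a.e., so the extensibility criterion \eqref{ext_crit} is likewise an essential-$\limsup$ statement.
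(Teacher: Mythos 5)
Your argument is correct and is precisely the one the paper intends: the corollary is stated there as a direct consequence of \eqref{leqnormphi}, obtained by contraposition together with the boundedness of $\Phi$ on $\Ombar$, exactly as you do. Your remark about the almost-everywhere validity of \eqref{leqnormphi} is a fair point of care but causes no difficulty, as you note.
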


Combining now  Corollary \ref{ngb} with the estimate \eqref{leq}, we can conclude that if finite-time $L^\infty-$blow-up occurs then also $L^1-$blow-up takes place at the same finite time.
\begin{cor}\label{l1blowup}
Let $u$ and $\Tmax$ be as given by Theorem \ref{thm:Tmax}.\\
 If $\limsup_{t\nearrow \Tmax} \norm[\Liom]{u(\cdot,t)}=\infty,$ then also
\[
 \limsup_{t\nearrow \Tmax} \io u(x,t) dx = \infty.
\]
\end{cor}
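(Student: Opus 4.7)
The plan is to argue by contradiction, directly combining the gradient estimate \eqref{leq} from Theorem \ref{thm:Tmax} with Corollary \ref{ngb}. Suppose the hypothesis $\limsup_{t\nearrow \Tmax}\|u(\cdot,t)\|_{L^\infty(\Omega)}=\infty$ holds, which under the finite-time blow-up interpretation places us in the regime $\Tmax<\infty$, and assume for contradiction that there exists $M>0$ with $\io u(\cdot,\tau)\leq M$ for every $\tau\in(0,\Tmax)$. Corollary \ref{ngb} then asserts that $\limsup_{t\nearrow \Tmax}\io|\nabla u(\cdot,t)|^2=\infty$, so it will suffice to extract a uniform upper bound on $\io|\nabla u(\cdot,t)|^2$ for $t\in(0,\Tmax)$ in order to reach a contradiction.

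Fix any smoothly bounded subdomain $\Omega'\subsubset\Omega$, let $\phi\in C^2(\overline{\Omega'})$ denote the associated weight from \eqref{leq} solving $-\Delta\phi=1$ in $\Omega'$ with $\phi|_{\partial\Omega'}=0$, and let $C_{\Omega'}>0$ be the corresponding constant. Under the contradictory hypothesis, one immediately has
\[
\sup_{\tau\in(0,t)}\io u(\cdot,\tau)\leq M \qquad\text{and}\qquad \int_0^t\iop u\leq\int_0^t\io u\leq M\Tmax
\]
for all $t\in(0,\Tmax)$, while $\iop\phi\ln u_0$ is a finite constant because (H1) bounds $u_0$ from above and (H2) bounds $u_0$ away from zero on the compact set $\overline{\Omega'}\subsubset\Omega$.

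The one genuinely delicate term in the exponent of \eqref{leq} is $\iop\phi\ln u(\cdot,t)$, which could a priori become very negative at interior points as $t$ approaches $\Tmax$; however, only an upper bound is needed, and since $\phi\geq 0$ on $\Omega'$ and $\ln s\leq s$ for every $s>0$, the locally positive function $u$ satisfies
\[
\iop\phi\ln u(\cdot,t)\leq\iop\phi\cdot u(\cdot,t)\leq\|\phi\|_{L^\infty(\Omega')}\io u(\cdot,t)\leq\|\phi\|_{L^\infty(\Omega')}\cdot M
\]
for every $t\in(0,\Tmax)$. Inserting these three uniform bounds into \eqref{leq} yields a finite constant $C^\ast>0$ such that $\io|\nabla u(\cdot,t)|^2\leq C^\ast$ for almost every $t\in(0,\Tmax)$, in direct contradiction to the blow-up of $\io|\nabla u|^2$ supplied by Corollary \ref{ngb}. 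The main obstacle is precisely this control of the log-term, and once the inequality $\ln s\leq s$ is recognised the rest is a direct substitution.
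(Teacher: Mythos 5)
Your proposal is correct and follows exactly the route the paper indicates (the paper gives no written proof, only the remark that the corollary follows by ``combining Corollary \ref{ngb} with the estimate \eqref{leq}''): assume the mass stays bounded, bound every term in the exponent of \eqref{leq} — the key step being $\iop\phi\ln u\leq\iop\phi\,u\leq\|\phi\|_{L^\infty(\Omega')}\io u$ via $\ln s\leq s$ — to get a uniform bound on $\io|\nabla u|^2$, and contradict the gradient blow-up supplied by Corollary \ref{ngb}. Your restriction to the case $\Tmax<\infty$ (needed for the bound $\int_0^t\iop u\leq M\Tmax$) matches the paper's stated intent that the corollary concerns finite-time blow-up.
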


\section{Total mass. Proof of Theorem \ref{thm:longterm}}\label{s3}
Let $u$ be a solution of \eqref{0} on $[0,T]$.
Consider its mass
\be{defy}
 y(t)=\io u(x,t)\,dx,\qquad t\in[0,T),
\ee
and note that \eqref{defy} defines a continuous function on $[0,T]$. Indeed, we have the following.
\begin{lem}\label{totalmass}
 For any weak solution $u$ of \eqref{0} on $[0,T]$, \eqref{defy} defines an absolutely continuous function $y\colon[0,T]\to \R$ that satisfies
\begin{equation}\label{nk1}
 y'(t)=(y(t)-1)\io|\na u(x,t)|^2\, dx
\end{equation}
 for almost every $t\in(0,T)$.
\end{lem}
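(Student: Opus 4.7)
The plan is to derive \eqref{nk1} by testing the weak formulation \eqref{0w} with functions of the form $\varphi(x,t)=\chi_n(x)\eta(t)$, where $\chi_n\in C_0^\infty(\Om)$ is a spatial cutoff that approximates the constant function $1$ and $\eta\in C_0^\infty((0,T))$ is arbitrary, and then letting $n\to\infty$ before invoking the fundamental lemma of calculus of variations. The main obstacle will be to show that the near-boundary flux $\io u\nabla u\cdot\nabla\chi_n$ arising from expanding $\nabla(u\chi_n)$ vanishes in the limit in a sufficiently controlled manner to allow $L^1$-in-time passage to the limit.

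Absolute continuity of $y$ together with the identification $y'(t)=\io u_t(x,t)\,dx$ for a.e.\ $t\in(0,T)$ follows from $u_t\in L^2_{\mathrm{loc}}(\bar\Om\times[0,T))$: for any $T_0<T$, Fubini's theorem applied to $u(x,t)-u(x,s)=\int_s^t u_t(x,\tau)\,d\tau$ (valid for a.e.\ $x\in\Om$) gives $y(t)-y(s)=\int_s^t\io u_t(x,\tau)\,dx\,d\tau$. It therefore suffices to verify the distributional identity $\io u_t = (y-1)\io|\nabla u|^2$ on $(0,T)$.

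For this, pick $\chi_n\in C_0^\infty(\Om)$ with $0\le\chi_n\le 1$, $\chi_n\equiv 1$ on $\{x:\dist(x,\pO)\ge 2/n\}$, $\chi_n\equiv 0$ on $\{x:\dist(x,\pO)\le 1/n\}$ and $|\nabla\chi_n|\le Cn$, and insert $\varphi=\chi_n\eta$ into \eqref{0w}. Expanding $\nabla(u\chi_n)=\chi_n\nabla u+u\nabla\chi_n$ yields
\[
\int_0^T\!\!\eta\Big[\io u_t\chi_n+\io\chi_n|\nabla u|^2+\io u\nabla u\cdot\nabla\chi_n\Big]dt=\int_0^T\!\!\eta\Big(\io u\chi_n\Big)\Big(\io|\nabla u|^2\Big)dt.
\]
The three bulk quantities $\io u_t\chi_n$, $\io\chi_n|\nabla u|^2$ and $\io u\chi_n$ converge pointwise in $t$ to $\io u_t$, $\io|\nabla u|^2$ and $y(t)$ by dominated convergence and are dominated uniformly in $n$ by $\io|u_t|$, $\io|\nabla u|^2$ and $\|u\|_{L^1(\Om)}$, all of which lie in $L^1((0,T_0))$ for any $T_0<T$; Lebesgue's theorem then handles the $t$-integration.

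The hard part is the flux term $\io u\nabla u\cdot\nabla\chi_n$. Since $|\nabla\chi_n|$ is supported in the strip $\{1/n<\dist(\cdot,\pO)<2/n\}$ and satisfies $|\nabla\chi_n|^2\le C/\dist(\cdot,\pO)^2$ there, the Cauchy--Schwarz inequality gives
\[
\bigg|\io u\nabla u\cdot\nabla\chi_n\bigg|\le\bigg(\io|\nabla u|^2\bigg)^{1/2}\bigg(C\int_{\{\dist(\cdot,\pO)<2/n\}}\frac{u^2}{\dist(x,\pO)^2}\,dx\bigg)^{1/2}.
\]
Because $u(\cdot,t)\in W_0^{1,2}(\Om)$, Hardy's inequality on smooth bounded domains bounds $\io u^2/\dist(x,\pO)^2$ by $C_H\io|\nabla u|^2$, so the right-hand factor tends to $0$ pointwise in $t$ by dominated convergence while remaining bounded by $\sqrt{C_H}(\io|\nabla u|^2)^{1/2}$; consequently $|\io u\nabla u\cdot\nabla\chi_n|$ is dominated uniformly in $n$ by $C'\io|\nabla u|^2\in L^1((0,T_0))$ and vanishes in the limit after integrating against $\eta$. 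Letting $n\to\infty$ thus yields $\int_0^T\eta\bigl[y'+\io|\nabla u|^2-y\io|\nabla u|^2\bigr]dt=0$ for every $\eta\in C_0^\infty((0,T))$, and the fundamental lemma of calculus of variations produces \eqref{nk1} for a.e.\ $t\in(0,T)$.
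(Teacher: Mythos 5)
Your proof is correct, and at its core it follows the same strategy as the paper: derive the mass identity by inserting (an approximation of) the spatially constant test function $\varphi\equiv 1$ into the weak formulation, so that the term $\int_0^T\io\na u\cdot\na(u\varphi)$ collapses to $\int_0^T\io|\na u|^2$ and the nonlocal term produces $y\io|\na u|^2$. The differences are in execution. The paper works with \eqref{0vw}, takes $\varphi(x,t)=\chi(t)$ with a piecewise-linear temporal hat function, and uses $u\in C^0([0,T);L^2(\Om))$ to pass $\delta\searrow 0$ at the endpoints, obtaining the integrated identity $y(t)-y(s)=\int_s^t(y-1)\io|\na u|^2$ from which absolute continuity is read off; crucially, it dismisses the admissibility of a spatially constant test function with the phrase ``standard approximation arguments.'' You instead obtain absolute continuity and $y'=\io u_t$ directly from $u_t\in L^2_{loc}(\bar\Om\times[0,T))$ via Fubini, use \eqref{0w} with $\varphi=\chi_n(x)\eta(t)$, and—this is the genuine added value—carry out the spatial cutoff limit explicitly: the flux term $\io u\na u\cdot\na\chi_n$ is controlled by Cauchy--Schwarz together with Hardy's inequality $\io u^2/\dist(x,\pO)^2\le C_H\io|\na u|^2$ (valid since $u(\cdot,t)\in W_0^{1,2}(\Om)$ and $\pO$ is smooth), which supplies both the pointwise-in-$t$ vanishing on the shrinking boundary strip and the $L^1$-in-time domination by $C'\io|\na u|^2$ needed for Lebesgue's theorem. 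In short, your argument is a rigorous instantiation of exactly the approximation step the paper leaves implicit; both routes are valid, and yours is the more self-contained of the two.
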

\begin{proof}
We will show that whenever $0<s<t<T$,
\be{yeq}
 y(t)-y(s) = \int_s^t \left((y(\tau)-1)\io |\na u(x,\tau)|^2\, dx\right) d\tau,
\ee
where absolute continuity follows from the representation as integral and the assertion about the derivative is a direct consequence of division by $t-s$ and passing to the limit $s\to t$.

Let $0<s<t<T$ and $0<\delta<\min\set{s,T-t}$.
Define the function $\chi\colon \R\to\R$ by setting:
\[
 \chi(\tau)=\begin{cases}
0,& \tau<s-\delta,\\
1+\frac{\tau-s}\delta,&s-\delta\leq\tau<s,\\
1, & s\leq\tau<t,\\
             1-\frac{\tau-t}\delta,&t\leq \tau<t+\delta,\\
	      0,&\tau\geq t+\delta.
\end{cases}
\]
%
Then, according to standard approximation arguments, $\phii(x,t):=\chi(t)$ defines an admissible test function for \eqref{0vw} and we obtain
  \bas
	-\frac1\delta \int_{s-\delta}^s \io u + \frac{1}{\delta} \int_{t}^{t+\delta} \io u + \int_{s-\delta}^{t+\delta} \io |\nabla u|^2 \varphi
	= \int_{s-\delta}^{t+\delta} \Big( \io u\varphi \Big) \cdot \Big( \io |\nabla u|^2 \Big).
  \eas
Since $u\in C_{loc}([0,T),L^2(\Om))$, we have
\[
 \frac{1}{\delta} \int_{t}^{t+\delta} \io u\to u(t) \quad \mbox{and} \quad \frac{1}{\delta} \int_{s-\delta}^{s} \io u\to u(s)
\]
as $\delta\searrow 0$.

Also by Lebesgue's dominated convergence theorem,
\[
 \int_{s-\delta}^{t+\delta} \io |\nabla u|^2 \varphi \to  \int_s^{t} \io |\nabla u|^2
\]
and
\[
 \int_{s-\delta}^{t+\delta} \Big( \io u\varphi \Big) \cdot \Big( \io |\nabla u|^2 \Big) \to \int_s^{t} \Big( \io u \Big) \cdot \Big( \io |\nabla u|^2 \Big)
\]
as $\delta\searrow 0$. Hence, \eqref{yeq} holds.
\end{proof}

This lemma is the main ingredient in the following proof of Theorem \ref{thm:longterm}:

\begin{proof}[Proof of Theorem \ref{thm:longterm}]
(i) \ In the case of subcritical initial mass Lemma \ref{totalmass} shows that y as defined in \eqref{defy} is decreasing, which by Corollary \ref{l1blowup} entails global existence,
and from the nonnegativity of $y$ we derive that $y(t)\to c$ as $t\to\infty$ for some $c\geq 0$.
Note that Poincar\'e's and H\"older's inequalities imply that for some $C_P>0$ we have
\[
  	\io |\na u|^2 \,dx \geq \frac1{C_P} \io u^2\,dx \geq \frac1{C_P|\Om|} \left(\io u \,dx\right)^2
	= \frac1{C_P|\Om|} y^2 \qquad \mbox{ on } (0,\infty),
\]
and hence Lemma \ref{totalmass}, due to the negativity of $y(t)-1,$ entails that
\[
 	y'(t) \leq (y(t)-1) \frac1{C_P|\Om|}\, y^2(t) \leq - \frac{1-y(0)}{C_P|\Om|}\, y^2(t)
	\leq - \frac{1-y(0)}{C_P|\Om|} \, c^2
\]
for almost every $t>0$.
This would lead to a contradiction to the nonnegativity of $y(t)$ if $c$ were positive, whence actually $c=0$.\abs
(ii) \
If $\io u_0=1$, then Lemma \ref{totalmass} implies that
\[
	y(t)-1=\int_0^t\left[(y(s)-1)\io|\na u(x,s)|^2\,dx\right]\, ds,
\]
and by virtue of Gronwall's lemma we conclude $y(t)-1\equiv 0$ throughout the time interval on which the solution exists, which combined with Corollary \ref{l1blowup} also implies global existence.\abs
(iii) \
In the case when the total mass is supercritical initially,
Lemma \ref{totalmass} entails that $y$ is nondecreasing, and again Poincar\'e's and H\"older's inequalities imply that
\[
  	y'(t)\geq \frac{y(0)-1}{C_P|\Om|} y^2(t) \qquad \mbox{for a.e. } t\in[0,\Tmax)
\]
with some $C_P>0$.
Now let $z$ denote the solution to
\[
 	z'(t)=\frac{y(0)-1}{C_P|\Om|}z(t)^2,\;z(0)=z_0,
\]
for some $1<z_0<y(0)$, 
defined up to its maximal existence time $T_0>0$. Then $T:=\Tmax<T_0$, because $y\geq z$,
and the assertion follows by Theorem \ref{thm:Tmax} in combination with Corollary \ref{l1blowup}.
\end{proof}
\mysection{Global blow-up. Proof of Theorem \ref{thm:blowupset}}\label{s4}
We proceed to prove that blow-up of our solutions always occurs globally, as stated in Theorem
\ref{thm:blowupset}.
\begin{proof}[Proof of Theorem \ref{thm:blowupset}]
Assume to the contrary that the closed set $\mathcal{B}$ is strictly contained in $\Ombar$. Then there exists a smoothly bounded subdomain $\Om'\sub\Om\setminus\mathcal{B}$ such that $u$ is bounded in $\Om'\times(0,\Tmax)$. Let $\phi$ be a solution to $-\Lap \phi=1$ in $\Om'$, $\phi=0$ on $\dOm'$.

Consider $T'<\Tmax$. Due to the local positivity of $u$ we have $\frac\phi u\in L^\infty(\Om\times(0,T'))$ and $\na \frac\phi u=\frac{\na \phi}u-\frac\phi{u^2}\na u\in L^2(\Om'\times(0,T'))$ and hence $\frac{\phi}u\in L^2((0,T'), W_0^{1,2}(\Om'))\cap L^\infty(\Om\times(0,T'))\subset L^2((0,T'), W_0^{1,2}(\Om))\cap L^\infty(\Om\times(0,T'))$. Therefore, it can readily be verified by approximation arguments that it is  possible to use $\phii=\frac{\phi}u$ as a test function in \eqref{0w}, which then leads to
\bas
	\int_0^t \iop \frac{u_t}u \phi \,dx\,ds+ \int_0^t \iop \nabla u \cdot \nabla \phi \,dx\,ds
	=\int_0^t \left( \iop \phi \,dx \right) \cdot \left( \io |\nabla u|^2 \,dx\right)\,ds
\eas
for any $t\in(0,\Tmax)$.
Hence, with $C_{\Om'}:=\iop \phi$ and because of $\partial_\nu \phi\big\rvert_{\dOm'}\leq 0$,
\bas
\iop \phi \ln u(t)\,dx - \iop \phi \ln u_0\, dx - \int_0^t \iop  u \cdot \Lap \phi \,dx\,ds
	\geq C_{\Om'} \int_0^t  \io |\nabla u|^2\,dx\,ds,
\eas
that is
\be{preparationglobalblowup}
\int_0^t \iop  u \,dx\,ds + \iop \phi \ln u(t)\,dx - \iop \phi \ln u_0\,dx \geq C_{\Om'} h(t),
\ee
where $h(t):=\int_0^t  \io |\nabla u(x,s)|^2\,dx\,ds$ and where -- due to the choice of $\Om'$ -- the left hand side is bounded from above.

On the other hand, from Lemma \ref{totalmass} we know that
\[
 \frac{y'(t)}{y(t)-1} = \io |\na u|^2\,dx
\]
for $y(t)=\io u(x,t)\,dx$. Therefore
\[
 h(t)= \int_0^t \io |\na u|^2 \,dx\,ds= \int_0^t \frac{y'(\tau)}{y(\tau)-1}\,ds = \ln(y(t)-1)-\ln(y(0)-1)=\ln \frac{\io u(x,t)\,dx-1}{\io u_0\,dx-1}
\]
and, by Theorem \ref{thm:longterm} (iii), $\limsup_{t\nearrow \Tmax} h(t)=\infty$, contradicting the boundedness of the left hand side of \eqref{preparationglobalblowup}.
\end{proof}

We have seen that the question of global existence versus blow-up of solutions to \eqref{0} is intimately connected with the size of the initial data. If $\io u_0>1$, the solution blows up globally; if $\io u_0<1$, we have proven convergence towards $0$. The missing case of solutions emanating from initial data with unit mass must exhibit a behaviour different from either, as Theorem \ref{thm:longterm} (ii) shows. For a study of these solutions, which are actually very important for the described replicator dynamics model, we refer the reader to the forthcoming article \cite{La}.

\mysection{Appendix A: Modelling background}
{\it Evolutionary game dynamics} is a major part of modern game theory. It was appropriately fostered by evolutionary biologists such as W. D. Hamilton and J. Maynard Smith (see  \cite{DR98} for a collection of survey papers and \cite{Si93} for a popularized account) and it actually brought a conceptual revolution to the game theory analogous with the one of population dynamics in biology. The resulting population-based approach has also found many applications in non-biological fields like economics or learning theory and introduces a significant enrichment of {\it classical} game theory which focuses on the concept of a rational individual.

The main subject of evolutionary game dynamics is to explain how a population of players
update their strategies in the course of a game according to the strategies' success. This
contrasts with classical noncooperative game theory that analyzes
how rational players will behave through static solution concepts
such as the Nash Equilibrium (NE) (i.e., a strategy choice for each
player whereby no individual has a unilateral incentive to change
his or her behaviour).

As Hofbauer and Sigmund \cite{hs03} pointed out,
strategies with high pay-off will spread within the population through learning, imitation or inheriting processes  or even by  infection. The
pay-offs depend on the actions of the co-players, i.e. the frequencies in which the various strategies appear, and since these
frequencies change according to the pay-offs, a feedback loop appears. The dynamics of this feedback loop will determine the long time progress of the game and its investigation is exactly the course of evolutionary game theory.

According to the extensive survey paper \cite {hs03} there is a variety of different dynamics in evolutionary game theory: replicator dynamics, imitation dynamics, best response dynamics, Brown-von Neumann-Nash dynamics e.t.c.. However, the dynamics most widely used and studied in the literature on evolutionary game theory are the replicator dynamics which was introduced in \cite{TaJ78} and baptised in \cite{ScS83}. Such kind of dynamics illustrate  the idea that in a dynamic process of evolution a strategy  should increase in frequency if it is a successful strategy in the sense that individuals playing this strategy obtain a higher than average payoff.

Let us consider a game with $m$ discrete pure strategies, forming the strategy space $S=\{1,2,...,m\},$ and corresponding frequencies $p_i(t), i=1,2,...,m,$ for any $t\geq 0.$ (Alternatively  $S$ could be considered as the set of different states (genetic programmes) of a biological population). The frequency (probability) vector $p(t)=(p_1(t),p_2(t),...,p_m(t))^{T}$ belongs to the invariant simplex
\[S(m)=\left\{y=(y_1 ,y_2,...,y_m)^{T}\in \R^m: y_i\geq 0, i=1,2,...,m\quad\mbox{and}\quad \sum_{i=1}^m y_i=1\right\}.\]
The game is actually determined by the pay-off  matrix $A=(a_{ij})$, which is a real $m\times m$
symmetric matrix. Pay-off means expected gain, and if an individual  plays strategy $i$ against another individual
following strategy $j,$ then the pay-off to $i$ is defined to be $a_{ij}$ while the pay-off
to $j$  is $a_{ji}.$ For symmetric games matrix $A$ is considered to be symmetric.
(In the case of a biological population  pay-off represents fitness,
or reproductive success.)

Then the expected pay-off for an individual playing strategy $i$ can be expressed as
\[
(A\cdot p(t))_i=\sum_{j=1}^m a_{ij} p_j(t),
\]
whereas the average pay-off over the whole population is given by
\[(p(t)^{T}\cdot A\cdot p(t))=\sum_{i=1}^m\sum_{j=1}^m a_{ij} p_i(t) p_j(t).\]
Consider that our game is symmetric with infinitely many players (or that the biological population is infinitely big and its generations blend continuously to each other) then we obtain that $p_i(t)$ evolve as differentiable functions. Note that the rate of increase of the per capita rate of growth $\dot{p}_i/p_{i}$  of strategy (type) $i$ is a measure of its evolutionary success; here $\dot{p}_i$ stands for the time derivative of $p_i.$ A reasonable assumption, which is also in agreement with the basic tenet of Darwinism, is that the per capita rate of growth (i.e. the logarithmic derivative) $\dot{p}_i/p_{i}$ is given by the difference between the pay-off for strategy (type) $i$ and the average pay-off. This yields the {\it the replicator dynamical system},
\begin{equation}\label{rd1}
\frac{dp_i}{dt}=\left(\sum_{j=1}^m a_{ij} p_j(t)-\sum_{i=1}^m\sum_{j=1}^m a_{ij} p_i(t) p_j(t)\right)\,p_i(t),\quad i=1,2,...,m, \quad t>0.
\end{equation}
The dynamical system \eqref{rd1} actually describes the mechanism that individuals tend to switch to strategies that are doing well, or that individuals bear offspring who tend to use the same strategies as their parents, and the fitter the individual, the more numerous his offspring.

Most of the work on replicator dynamics has focused on games that have a
finite strategy space, thus leading to a dynamical system for the frequencies of the population which is finite dimensional.
However,  interesting applications arise either in biology or economics  where the strategy space is not finite or, even, not discrete, see  \cite{B90, MS82, OR01, OR02}. In case the strategy space $S$ is discrete but consisting of an infinite number of strategies, e.g. $S=\Z, $ then the replicator dynamics describing the evolution of the infinite dimensional vector $p(t)=(..., p_1(t), p_2(t),...)$ is described by the following
\[
\frac{dp_i}{dt}=\left(\sum_{j\in \Z}a_{ij} p_j(t)-\sum_{j\in \Z}\sum_{i\in \Z} a_{ij} p_i(t) p_j(t)\right)\,p_i(t),\quad t>0,
\]
which is a infinite dynamical system with $p_i(t)\geq 0$ for $i\in \Z$ and $||p(t)||_{\ell^1(\Z)}=1$ for any $t>0.$

In the current paper we are concentrating on games whose pure strategies belong to a continuum. For instance,  this could be the aspiration level of a player or the size of an investment in economics or it might arise in situations where the pure strategies correspond to geographical points as in economic geography, \cite{PK96}. On the other hand, in biology such strategies correspond to some continuously varying trait such as the sex ratio in a litter or the virulence of an infection, \cite{hs03}. There are different ways of modelling the evolutionary dynamics in this case, however in the current work we adapt the approach introduced in \cite{B90}. In that case the strategy set $\Omega$ is an arbitrary, not necessarily bounded, Borel set of $\R^N, N\geq 2,$ hence strategies can be identified by $x \in \Omega$. For the case of symmetric
two-player games, the pay-off can be given by a Borel measurable function $f: \Omega \times \Omega \rightarrow \R,$ where $f(x,y)$ is the pay-off for
player $1$ when she follows strategy $x$ and player $2$ plays strategy $y.$ A population is now characterized  by its state, a
probability measure $\mathcal{P}$ in the measure space $(\Omega,{\mathcal A})$ where ${\mathcal A}$ is the Borel algebra of subsets of $\Omega.$ The average (mean)
pay-off of a sub-population in state $\mathcal{P}$ against the overall population in state $\mathcal{Q}$ is given by the form
$$E(\mathcal{P},\mathcal{Q}):=\int_{\Omega}\int_{\Omega} f(x,y) \mathcal{Q}(dy) \mathcal{P}(dx).$$
Then, the success (or lack of success) of a strategy $x$ followed by population $\mathcal{Q}$ is provided by the difference
$$\sigma (x,\mathcal{Q}):=\int_{\Omega}f(x,y) \mathcal{Q}(dy)-\int_{\Omega}\int_{\Omega} f(x,y) \mathcal{Q}(dy) \mathcal{Q}(dx)=E(\delta_x,\mathcal{Q})-E(\mathcal{Q},\mathcal{Q}),$$
where $\delta_x$ is the unit mass concentrated on the strategy $x.$

The evolution in time of the population state $\mathcal{Q}(t)$ is given by the replicator dynamics  equation
\begin{equation}\label{rd2}
\frac{d\mathcal{Q}}{dt}(A)=\int_{A} \sigma (x,\mathcal{Q}(t)) \mathcal{Q}(t)(dx),\;t>0,\quad \mathcal{Q}(0)=\mathcal{P},
\end{equation}
for any $A \in \mathcal {A},$ where the time  derivative should be understood with respect to the variational norm of a subspace of the linear span $\mathcal{M}$ of ${\mathcal A}.$  The well-posedeness of \eqref{rd2} as well as relating stability issues were investigated in \cite{OR01,OR02} under the assumption that the pay-off function $f(x,y)$ is bounded.

The abstract form of equation \eqref{rd2} does not actually allow us to obtain insight on the form of its solutions and thus a better understanding of the evolutionary dynamics of the corresponding game. In order to have a better overview of the evolutionary  game, following the approach in \cite{KPY08, KPXY10},  we restrict our attention
to measures $\mathcal{Q}(t)$ which, for each $t>0$, are absolutely continuous with respect to the Lebesgue measure, with probability density $u(x,t).$ Then the replicator dynamics equation \eqref{rd2} can be reduced to the following integro-differential equation
\be{non1}
\frac{\partial u}{\partial t}=\left(\int_{\Omega}f(x,y)u(y,t)\,dy-\int_{\Omega}\int_{\Omega} f(z,y)u(y,t) u(z,t) dy\,dz\right)u(x,t),\;t>0,\;\;x\in \Omega,
\ee
for the density $u.$

There are applications both in biology as well as in computer science where the pay-off kernel has the form $f(x,y)=G(x-y)$ with $G$ being a steep function of Gaussian type, see \cite{HI95, HI98, KO, M11}. This case, in general, models games where the pay-off is measured as the distance from some reference strategy and finally under some proper scaling leads to
\begin{equation}\label{rd3}
\int_{\Omega}f(x,y)u(y,t)\,dy\approx\Delta u(x,t),
\end{equation}
(see also \cite{KP11}) which by virtue of \eqref{rd2}  yields
\begin{equation}\label{rd4}
\frac{\partial u}{\partial t}\approx\left(\Delta u-\int_{\Omega}u\,\Delta u\,dx \right)u.
\end{equation}
Another alternative towards getting pay-offs of this type is to consider
a game with a discrete strategy space and take the appropriate scaling limit. In that case a Taylor expansion and a proper scaling gives a similar approximation to \eqref{rd3}, see also \cite{KPY08, KPXY10}.

Therefore in case $\Omega$ is a bounded and smooth domain of $\R^N$ it is easily seen that via integration by parts the nonlocal integro-differential dynamics equation \eqref{non1}  is approximated by the degenerate nonlocal parabolic equation
\be{non2}
\frac{\partial u}{\partial t} = u\Big(\Delta u + \int_\Omega |\nabla u|^2\,dx\Big), \qquad x\in \Omega, \ t>0.
\ee
The nonlocal equation \eqref{non2} is associated with initial condition
\be{inc}
u(x,0)=u_0(x),\;\, x\in \Omega,
\ee
and homogeneous Dirichlet boundary conditions
\be{dbc}
u(x,t)=0,\;\,x\in \partial \Omega, \;\,t>0,
\ee
when the agents avoid to play  the strategies locating on the boundary of the strategy space since they are supposed to be too risky, or the individuals of the biological population do not interact when they are close to the spatial boundary where probably the ``food'' is less.
We remark that when on the boundary of the strategy space individuals do not really distinguish between
nearby strategies and hence populate them equally, then the non-local equation \eqref{non2} should rather
be complemented homogeneous Neumann boundary conditions not explicitly considered here, see \cite{KPXY10}.
%
%
%
%
%
%
%
%
\mysection{Appendix B: A convenient approximation of the initial data}

In the article, we have kept the proof of Lemma \ref{lem:existenceofapproximation} very short. Here we give a more detailed version, which still suppresses some of the more involved technical calculations:
\begin{proof}
Choose $\gamma>0$ and a domain $U_\thetaa\sub \Om$ such that $\dist(U_\thetaa,\dom)>\gamma$. Let $\thetaa\in C_0^\infty(U_\thetaa)$ with $\thetaa\geq 0$ and $\io \thetaa=1$. Let $\eps>0$ and let $\phii\in C_0^\infty(\Om)$ be such that $\norm[W^{1,2}(\Om)]{\phii-u_0}<\eps$ and $\norm[\Phi,\infty]{\phii}\leq C+\zeta(\eps)$, where $\zeta\colon [0,\infty)\to[0,\infty)$ is a function satisfying $\lim_{\eps\to 0} \zeta(\eps)=0$. In order to see that this is possible, recall how smooth approximations $\phii$ of $W^{1,2}(\Om)$-functions $u_0$ are usually constructed (\cite[I §3]{Wloka}):
With the aid of a partition of unity $\set{\al_i}$, the function is written as sum, where the single summands are supported in small patches only and those close to the boundary are shifted towards the interior by application of shift operators $s_i$; finally the function is smoothened by convolution with a standard mollifier $j_\eps$.

We observe that the same procedure applied to $\Phi$ does not violate the inequality $\norm[\Phi,\infty]{u_0}\leq C$, i.e. $u_0\leq C\Phi$, too much, that is
\[
 C\sum j_\eps\star(\al_i s_i(\Phi)) \leq C\sum \al_i\Phi + \zeta \Phi=C\Phi+\zeta \Phi,
\]
holds for some $\zeta$ with $\lim_{\eps\searrow 0} \zeta(\eps)=0$. (The calculations showing this use the fact that mollification of smooth functions converge in $C^1$, that $\Phi$ grows towards the interior, and the Mean Value Theorem.)
Hence the fact that mollification preserves pointwise estimates that hold everywhere shows that also $\phii$ satisfies $\phii(x)\leq C\Phi(x)$.

Let $K$ be a compact subset of $\Om$ such that $|\Om\setminus K|<\eps$ and $\dist(\dom,K)<\eps$.
Let $\rho\in C_0^\infty(\Om)$ such that $\rho=1$ on $\hat K \cup \supp\phii$ and $|\na\rho(x)|<\frac2{\dist(\hat K,\dom)}$ and $0\leq \rho\leq 1$.
Denoting
\begin{align*}
 A=A(\eps)=&\io \Phi^2|\na\rho|^2+\io (1-\rho)^2|\na\Phi|^2+\io |\na\thetaa|^2\left(\io (1-\rho)\Phi\right)^2\\
 B=B(\eps)=&-1-2\io(1-\rho)\Phi\io\na\phii\na\thetaa-2\io(1-\rho)\Phi\io(u_0-\phii)\io|\na\thetaa|^2+2\eps|\Om|\io(1-\rho)\Phi\io|\na\thetaa|^2\\
 \Gamma=\Gamma(\eps)=&\io|\na\phii|^2+2\io(u_0-\phii)\io\na\phii\na\thetaa-2\eps|\Om|\io\na\phii\na\thetaa-2\eps|\Om|\io(u_0-\phii)\io|\na\thetaa|^2\\&+\left(\io(u_0-\phii)\right)^2\io|\na\thetaa|^2+\eps^2|\Om|^2\io|\na\thetaa|^2
\end{align*}
we let
\(
 C=C(\eps)=-\frac{2\Gamma}{B-\sqrt{B^2-4A\Gamma}}.
\) Then $C$ solves
\begin{equation}
 AC^2+BC+\Gamma=0\label{eq:Ceq}.
\end{equation}

As $\Phi$ and $\na \Phi$ are bounded, $1-\rho$ is supported on a small set with measure smaller than $\eps$, and $\Phi|\na\rho|\leq 2D_2$, where $\Phi(x)\leq D_2 dist(x,\dom)$, most integrals from the definition of $A, B, \Gamma$ can be estimated, yielding $A\to 0$, $B\to -1$, $\Gamma\to \io |\nabla u_0|^2$ as $\eps\to 0$.
Therefore,
\[
 C= -\frac{2\Gamma}{B-\sqrt{B^2-4A\Gamma}}\to -\frac{2\io|\na u_0|^2}{-1-\sqrt{1-0}} = \io|\na u_0|^2>0,
\]
as $\eps\to 0$, and in particular,
\(
 \limsup (C-L)\leq 0.
\)
Furthermore, for sufficiently small $\eps$, we have $C>0$. We also observe that
\[
\al=\io(u_0-\phii)-\eps|\Om|-C\io(1-\rho)\Phi\to 0,
\]
as $\eps\to 0$.
If $\eps$ is small enough, therefore, $|\al|<\frac{\frac12 essinf_{\set{x; \dist(x,\dom)>\frac \gamma2}} u_0}{\sup\thetaa}$ and hence \\
\( |\al\thetaa|\leq \frac12 \inf_{\set{x\in\Om, \dist(x,\dom)>\frac\gamma2}}\phii\) on $\Om$ (as $\supp\thetaa\subset\supp\phii$).
Therefore, \begin{equation}\label{eq:lowerbd}\phii(x)+\al\thetaa(x)\geq \frac12 \inf_{\set{x\in\Om;\dist(x,\dom)>\frac d2}}\phii=:C_K
           \end{equation}
 for $x\in K$ and \begin{equation}\label{eq:nonnegative} \phii+\al\thetaa\geq 0\end{equation} on $\Om$, because $\phii\geq 0$ and $\al\thetaa\neq 0$ only on $U_\thetaa$, where \eqref{eq:lowerbd} guarantees \eqref{eq:lowerbd} already. We also have
\begin{align*}
 \phii+\al\thetaa\leq& (L+\frac\eps2)\Phi +\al\thetaa \leq (L+\frac\eps2)\Phi +\thetaa \io |u_0-\phii|\\
 \leq& (L+\zeta(\eps))\Phi
\end{align*}
with some $\zeta$ fulfilling $\lim_{\eps\searrow 0} \zeta(\eps)=0$. Finally, define
\begin{equation}\label{eq:defue}
 u_{0\eps}=\eps+C(1-\rho)\Phi+\rho(\phii+\alpha\thetaa).
\end{equation}
Estimate \eqref{eq:nonnegative}, the positivity of $C$ and of $\Phi$ in $\Om$ together with \eqref{eq:defue} entail $\uen\geq \eps$.
Accordingly \eqref{a1} holds, for we clearly obtain $u_{0\eps}=\eps$, and $\Lap u_{0\eps}=-C=-\io|\na\uen|^2$ on $\dOm$, because
\bgee
 \io |\na u_{0\eps}|^2&=&\io|\na(\eps+C(1-\rho)\Phi+\rho(\phii+\al\thetaa)|^2
=AC^2+(B+1)C+\Gamma=C
\egee
by \eqref{eq:Ceq}. Furthermore,
\bgee
 \io u_{0\eps} &=& \io \eps+ \io C(1-\rho)\Phi+ \io \rho\phii + \alpha \io\rho \thetaa=\io u_0
\egee
that is \eqref{a6}. The smoothness assertion follows from the smoothness of $\phii$ (as mollification) and $\Phi$ and that of $\rho,\thetaa\in C_0^\infty(\Om)$.
By definition of $\uen$,
\[
 \norm[\Phi,\infty]{u_{0\eps}-\eps}=\norm[\Phi,\infty]{C\Phi(1-\rho)+\rho(\phii+\al\thetaa)}.
\]
In every point $x\in\Om$, $\uen-\eps$ is a convex combination of $C\Phi$ and $\phii+\al\thetaa$, which both satisfy the estimate ``$\leq (L+\zeta(\eps))\Phi$''. Therefore \eqref{ae} holds. Furthermore,
\begin{align*}
 \norm[W^{1,2}(\Om)]{u_{0\eps}-u_0}=&\norm[W^{1,2}(\Om)]{\eps+C\Phi(1-\rho)+\rho(\phii+\al\thetaa)-u_0}\\
 \leq&\eps\sqrt{|\Om|}+ C\norm[L^2(\Om)]{\na\Phi(1-\rho)}+C\norm[L^2(\Om)]{\Phi\na\rho}+C\norm[L^2(\Om)]{\Phi(1-\rho)}\\
\leq&\eps\sqrt{|\Om|}+ C\sup|\na \Phi| \sqrt{\eps}+2CD_2\sqrt{\eps} +C\sup\Phi \sqrt{\eps}
+ \eps+\eps+\al \norm[W^{1,2}(\Om)]{\thetaa} \to 0
 \end{align*}
 as $\eps\searrow 0$,
 where we have, once again, used that
 \(
  \norm[L^2(\Om)]{\Phi\na\rho}\leq 2D_2\sqrt{\eps},
 \)
 as well as $\norm[W^{1,2}(\Om\setminus K)]{u_0}<\eps$ and $\norm[W^{1,2}(\Om)]{u_0-\phii}<\eps$.
 In total, we obtain \eqref{a5}.
Finally, given $K\subset\subset \Om$, the estimate in \eqref{a3} holds for $0<\eps<\dist(K,\dom)$ and with the choice of $C_K$ as in \eqref{eq:lowerbd}.
\end{proof}
%
%
%
%
%
%
%
%

\vspace*{5mm}

{\bf Acknowlegement.}  \quad N.I.~Kavallaris would like to thank Professors V.G.~Papanicolaou and A.N.~Yannacopoulos for introducing him to the topic of infinite dimensional replicator dynamics. N. I. Kavallaris is grateful to  Institut f\"ur Mathematik, Universit\"at Paderborn for its hospitality and stimulating atmosphere during the preparation of part of this research paper.

\end{document}